\newtheorem{mydef}{Definition}[subsection]
\newtheorem{prop}{Proposition}[subsection]
\newtheorem*{remark}{Remark}
\newtheorem{thm}{Theorem}[subsection]
\newtheorem{lem}{Lemma}[subsection]
\newtheorem{cor}{Corollary}[subsection]
\newtheorem{ex}{Example}[subsection]
\newcommand{\booktitle}[1]{\textit{#1}}
\DeclareMathOperator*{\im}{im}
\DeclareMathOperator*{\rk}{rk}
\title{A combinatorial interpretation of harmonic cycles}
\author[1]{Younng-Jin Kim \thanks{sptz@snu.ac.kr}}
\affil[1]{Department of Mathematical Sciences, Seoul National University}
\begin{document}
\maketitle
 
\begin{center}
Abstract.
\end{center}

   In this paper, we will investigate a harmonic cycle (discrete harmonic form). With a CW-complex $X$, we can construct the combinatorial Laplacian operator  $\bigtriangleup_*=\partial_{*+1} \partial^t_{*+1}+\partial^t_* \partial_*:C_*\rightarrow C_*$. The kernel $\mathcal{H}_*(X)=\ker \bigtriangleup_*$ is the harmonic space, the set of harmonic cycles, and is isomorphic to its homology due to combinatorial Hodge theory.

We will introduce four concepts; cycletree, unicyclization, winding number map, and standard harmonic cycle. A cycletree is $T\coprod e$ as a combinatorial object where $T$ is a spanning tree and $e$ is an edge on a given graph structure $G$. A unicyclization $\mathscr{A}$ consists of $G$ and information $\partial$, a substitute for faces. Then, we will define the winding number map $w_{\mathscr{A}}$ and the standard harmonic cycle $\lambda_{\mathscr{A}}$ and prove related properties.

Finally, we will show a relation between the winding number and the inner product with the standard harmonic cycle. Then, we will see the standard harmonic cycle is actually a harmonic cycle. In other words, we give a combinatorial interpretation on a harmonic cycle.

\tableofcontents

\section{Introduction}
A Harmonic cycle is the element in the kernel of Combinatorial Laplacian. Due to combinatorial Hodge decomposition, we can get the isomorphism between homology and the set of harmonic cycles for a given CW-complex $X$. In other words, there is a specific relation between homology and solutions in numerical PDE equations of discrete settings. We give a combinatorial meaning on harmonic cycles. 

For important related work, there are papers like \cite{NS} and \cite{Ca}. These papers prove similar results by using projections. 

In this paper, we've dealt with harmonic cycle with winding number even for a harmonic cycle of multi rank case. Moreover, we have studied the relation of harmonic cycles between similar unicyclizations or CW-complexes.

\section{Preliminaries}

\subsection{Review of finite chain complexes}
In this paper, we will assume basic knowledge of finite chain complexes and homology groups with coefficients in $\mathbb{Z}$ or $\mathbb{R}$.  Familiarity with cell complexes will be helpful. One may refer to standard texts such as  \cite{Mu} for details. 

For $d>0$, let $X=X_{0}\amalg \cdots \amalg X_{d}$ where $X_{i}$ is a nonempty finite set for each $i\in[0,d]$. We will refer to $X$ as a \emph{complex} of dimension $d$. (For example, $X$ may be a cell complex of dimension $d$ with $X_{i}$ the set of $i$-dimensional cells in $X$.) 
%The $i$-skeleton $X^{(i)}$ of $X$ is $X_{0}\cup X_{1}\cup\cdots \cup X_{i}$.   
%This condition on $X$ allows one to represent the boundary maps $\partial_{i}$ of its chain complex as matrices. 
%Also we define $X_{-1}$ to be a set with one element.
%where we define $X_{-1}=\{\emptyset\}$.
%Also we define each of $X_{-2}$ and $X^{(-2)}$ to be the void set. 
The $i$-th chain group of $X$ is the free abelian group  $C_{i}=C_{i}(X)\cong \mathbb{Z}^{|X_{i}|}$ generated by $X_{i}$.  The $i$-th boundary map $\partial_{i}=\partial_{i}(X):C_{i}\rightarrow C_{i-1}$ is an integer matrix whose rows and columns are indexed by $X_{i-1}$ and $X_{i}$, respectively, satisfying $\partial_{i-1}\partial_{i}=0$ for all $i$.  We define $\partial_{i}=0$ for $i\notin [1,d]$. The $i$-th \emph{coboundary map} $\partial^{t}_{i}:C_{i-1}\rightarrow C_{i}$ is the transpose of $\partial_{i}$. We will often refer to $X$ as the generator of the chain complex $\{C_{i},\partial_{i}\}_{i\in[0,d]}$.
%with the augmentation $\partial_{0}:C_{0}\rightarrow C_{-1}\cong\mathbb{Z}$ given by $\partial_{0}(v)=1$ for every $v\in X_{0}$.  

The $i$-th homology and cohomology groups of $X$ are defined $H_{i}(X)=\ker\partial_{i}/\im\,\partial_{i+1}$ and $H^{i}(X)=\ker\partial^{t}_{i+1}/\im\,\partial^{t}_{i}$, respectively. The elements of $Z_{i}:=\ker\partial_{i}$ and $Z^{i}:=\ker\partial^{t}_{i+1}$ are called $i$-cycles and $i$-cocycles, respectively. The $i$-th chain group and (co)homology group of $X$ with $\mathbb{R}$-coefficients are denoted $C_{i}(X;\mathbb{R})$ and $H_{i}(X;\mathbb{R})$. Note that $H_{i}(X;\mathbb{R})\cong H^{i}(X;\mathbb{R})$ for all $i$ as $\mathbb{R}$-vector spaces.
%$X$ is {\it acyclic} if $\tilde{H}_{i}(X)=0$ for all $i\in[-1,d]$. 
%Define $C_{-1}=\mathbb{Z}$ and $C_{i}=0$ for $i\notin [-1,d]$.
%and $C_{i}=0$ for $i\notin[-1,d]$. 
%Define $\tilde{H}_{i}(X)=0$ for $i\leq -1$.
%Note that $\tilde{H}_{d}(X)=\mbox{Ker}\,\partial_{d}$ is free abelian.
%Recall that $\mathrm{rk}\,\tilde{H}_{i}(X)=0$ iff $\tilde{H}_{i}(X)$ is finite.

%$$\Delta_{-1}=\partial_{0}\partial_{0}^{t}\, , \mbox{ and } \Delta_{d}=\partial^{t}_{d}\partial_{d}\, .$$   
%Since $L_{i}:=\partial_{i+1}\partial^{t}_{i+1}$ and $J_{i}:=\partial^{t}_{i}\partial_{i}$ are symmetric, non-negative definite, and $L_{i}J_{i}=J_{i}L_{i}=0$ because $\partial_{i}\partial_{i+1}=0$. Hence, each $\Delta_{i}$ is also symmetric and non-negative definite by Lemma \ref{matrixtheory1}.
%Suppose that $\partial_{i}\not=0$ for all $i\in [0,d]$. 
%(This proposition does not assume $X_{i}\not=\emptyset$ for every $i\in[0,d]$). 

\subsection{Harmonic cycles and combinatorial Hodge theory} In this subsection, we will define harmonic cycles, a main object of study of this paper, via combinatorial Laplacians, and recall combinatorial Hodge theory relating harmonic cycles and homology groups. 

Given a chain complex 
$\{C_{i}(X;\mathbb{R}),\partial_{i}\}_{i\in [0,d]}$ generated by $X=X_{0}\amalg\cdots\amalg X_{d}$,      
the $i$-th combinatorial Laplacian $\Delta_{i}=\Delta_{X,i}:C_{i}(X;\mathbb{R})\rightarrow C_{i}(X;\mathbb{R})$ is 
$$\Delta_{i}=\partial_{i}^t \partial_{i} + \partial_{i+1} \partial^t_{i+1}$$
for $i\in[0,d]$ (refer to \cite{E}).  
The $i$-th \emph{harmonic space} $\mathcal{H}_i(X)$ of the chain complex is $\ker\Delta_{i}$, and  
its elements are called $i$-\emph{harmonic cycles}.

Regard $C_{i}(X;\mathbb{R})$ as an 
$\mathbb{R}$-vector space endowed with a standard inner product $\circ$ such that the set $X_{i}$ of its generators forms an orthonormal basis. From the orthogonal decomposition $C_{i}(X;\mathbb{R})=\mathcal{H}_{i}(X)\oplus\im\partial_{i}^{t}\oplus\im\partial_{i+1}$ (refer to \cite{Fr}),
one can deduce 
\begin{equation}\label{harmonics}
\mathcal{H}_i(X)=\ker\partial_{i} \cap \ker\partial^t_{i+1}
\end{equation}
via $(\im M^{t})^{\perp}=\ker M$ for a matrix $M$.  Hence, we have an important fact that \emph{a harmonic cycle is both a cycle and a cocycle}.

From the above orthogonal decomposition for $C_{i}(X;\mathbb{R})$, we also see that $\ker\partial_{i}=\mathcal{H}_{i}(X)\oplus\im\partial_{i+1}$. Hence we have the following main statement of combinatorial Hodge theory:  for each $i\in[0,d]$
\begin{equation}\label{hodge}
\mathcal{H}_i(X)\cong H_{i}(X;\mathbb{R})
\end{equation}
as $\mathbb{R}$-vector spaces. In particular $\rk\mathcal{H}_{i}(X)=\rk H_{i}(X;\mathbb{R})$ for all $i$. (One can show that this isomorphism maps a harmonic cycle $h$ to its homology class $\overline{h}$.) 
%An important property of a harmonic class is energy minimizing 

The following \emph{energy minimizing property} of a harmonic class is a consequence of (\ref{hodge}): For $h\in \mathcal{H}_{i}(X)$ and $x\in \overline{h}$, 
\begin{equation}\label{energyminimizing}
h\circ h\leq x\circ x.
\end{equation} 
Indeed, this inequality follows easily from the facts $x=h+\partial_{i+1}y$ for some $y\in C_{i+1}(X;\mathbb{R})$ and $h\circ (\partial_{i+1}y)=(\partial_{i+1}^{t}h)\circ y=0$ because $h\in\ker\partial^t_{i+1}$. 

%
%.
%
%
%Since $A_i(X)$ (or $A_i(X; \mathbb{R})$) is a free module with the $i$-cell basis, so it has the natural inner product $\circ$. It means that we have the $l^2$-norm in $A_i$. For example, here is the following remark.

%\begin{remark}let $P$ be a path (or walk) in a graph $G=X^1$ of a CW-complex $X$. We may understand $P$ as an element of $A_1$ (i.e., a vector whose non-zero component is $\pm 1$). For any $\alpha \in A_1$, the inner product between $P$ and $\alpha$ can be written as
%\begin{equation}P\circ \alpha=\sum_i (the\ orientation\ of\ p_i)\times (the\ value\ of\ \alpha\ on\ edge\ p_i)\end{equation}
%Note that the previous equation is independent of how the (directed) edges of the graph $G$ are oriented.\end{remark}

\begin{ex}
\ \\
\begin{center}
\includegraphics[scale=0.3]{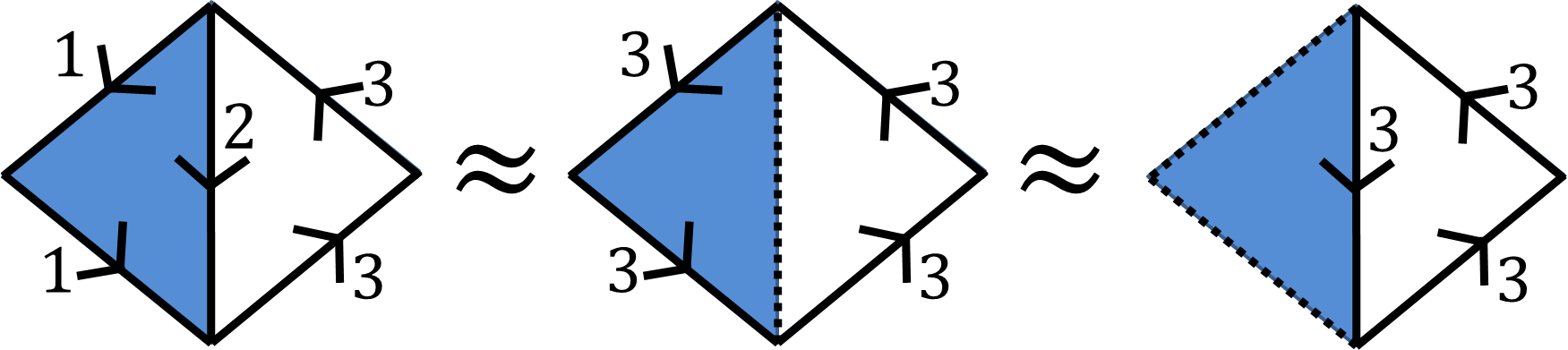}
\end{center}

The three cycles are homologous as elements of homology at dimension 1. However, their norms are different. To be specific, they are 1+1+4+9+9=24, 9+9+9+9=36, and 9+9+9=27 form left to right. Actually, the left cycle is a harmonic class, and its norm is the smallest norm among its homologous elements.
\end{ex}

\begin{thm}
(Mean value property at dimension 0) Let's assume $X$ is a locally finite CW-complex. Then, $h \in \mathcal{H}_0(X)$ satisfying the following mean value property: the coefficient of $h$ at a vertex $v$ is the mean of coefficients of $h$ at nearby vertices of $v$.
%- the value of $h$ at a n-simplex $u$ is the mean of $h\circ$[n-dimensional flow around $u$] over (n+1)-simplex $\sigma$ which has $u$ as its face, where a n-dimensional flow around $u$ is $\partial_{n+1} (\pm \sigma) - u$ as an element of $C_1$ and the sign $\pm$ is chosen so that $\partial_{n+1} (\pm \sigma) - u$ is zero at $u$.

%- the value of $h$ at a n-simplex $u$ is the sum of $h\circ$[n-dimensional flow near $u$] over n-simplex $\sigma$ which share the common face with $u$, where a n-dimensional flow near $u$ is $\partial_{n} (\pm \sigma) - u$ as an element of $C_1$ and the sign $\pm$ is chosen so that $\partial_{n+1} (\pm \sigma) - u$ is zero at $u$.
\end{thm}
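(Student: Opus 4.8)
The plan is to compute the zeroth Laplacian explicitly and simply read off the equation $\Delta_0 h = 0$ one vertex at a time. The first observation is that, by the convention $\partial_i = 0$ for $i \notin [1,d]$, we have $\partial_0 = 0$, so that $\Delta_0 = \partial_0^t\partial_0 + \partial_1\partial_1^t = \partial_1\partial_1^t$. The entire statement therefore reduces to understanding the single matrix $L := \partial_1\partial_1^t$ acting on a $0$-chain $h = \sum_v h_v\, v$. I would emphasize at the outset that, because $X$ is only assumed \emph{locally finite}, I deliberately do not invoke the orthogonal decomposition or the characterization \eqref{harmonics}, both of which rest on finite-dimensionality; instead I work directly from the defining equation $\Delta_0 h = 0$, which is meaningful entry-by-entry precisely because local finiteness guarantees that each row of $\partial_1$ has only finitely many nonzero entries, so each row of $L$ is a finite sum.

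Next I would fix the incidence convention $\partial_1 e = \mathrm{head}(e) - \mathrm{tail}(e)$ and compute the entries of $L$. The diagonal entry $L_{vv} = \sum_e (\partial_1)_{v,e}^2$ counts each non-loop edge incident to $v$ exactly once, since such an edge contributes $(\pm 1)^2 = 1$, whereas a loop at $v$ satisfies $\partial_1(\text{loop}) = v - v = 0$ and is invisible; hence $L_{vv} = \deg(v)$. For $v \neq w$, the off-diagonal entry $L_{vw} = \sum_e (\partial_1)_{v,e}(\partial_1)_{w,e}$ receives $(+1)(-1) = -1$ from each edge joining $v$ and $w$, so $L_{vw} = -m(v,w)$, minus the number of such edges. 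Thus $L$ is exactly the combinatorial graph Laplacian $D - A$, and
\[
(\Delta_0 h)_v = \deg(v)\, h_v - \sum_{w} m(v,w)\, h_w,
\]
the sum ranging over the vertices $w$ adjacent to $v$, counted with edge multiplicity.

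The conclusion is then immediate: setting $(\Delta_0 h)_v = 0$ and dividing by $\deg(v)$ gives $h_v = \frac{1}{\deg(v)} \sum_{w} m(v,w)\, h_w$, which says that the coefficient of $h$ at $v$ is the average of the coefficients of $h$ at the nearby vertices joined to $v$ by an edge. Since $h \in \mathcal{H}_0(X) = \ker\Delta_0$ means exactly that this identity holds simultaneously at every vertex $v$, the proof is finished once the entry computation above is recorded.

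The only genuine labor is this entry-level computation of $\partial_1\partial_1^t$, with careful attention to orientation signs, loops, and parallel edges. The conceptual point that requires care, and which I would flag as the real subtlety, is the temptation to reduce $\ker\Delta_0$ to $\ker\partial_1^t$: for a \emph{finite} complex these coincide, but for an infinite locally finite complex a harmonic $0$-cycle obeying the mean value property need not be locally constant (for instance $h_n = n$ on the bi-infinite path is harmonic yet has nonzero coboundary), so the identity $\ker(\partial_1\partial_1^t) = \ker\partial_1^t$ can fail when $h \notin \ell^2$. This is exactly why the argument must stay at the level of the raw equation $\Delta_0 h = 0$ rather than passing through the Hodge-theoretic characterization.
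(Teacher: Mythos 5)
Your proof is correct and follows essentially the same computation as the paper: since $\partial_0=0$, the operator $\Delta_0$ reduces to $\partial_1\partial_1^t$, whose diagonal entries are vertex degrees (loops contributing nothing) and whose off-diagonal entries count adjacencies with a minus sign, so $\Delta_0 h=0$ read at a vertex $v$ is exactly the mean value identity. The one genuine difference is that the paper first asserts $h\in\mathcal{H}_0(X)\iff h\in\ker\partial_1^t$ and then uses $\ker\partial_1^t=\ker(\partial_1\partial_1^t)$, whereas you deliberately bypass this and work straight from $\Delta_0 h=0$. Your instinct here is sound and is actually a correction to the paper's argument in the stated generality: for a locally finite but infinite complex the identity $\ker(\partial_1\partial_1^t)=\ker\partial_1^t$ rests on the inner-product cancellation $\partial_1\partial_1^t h=0\Rightarrow\|\partial_1^t h\|^2=0$, which requires $\partial_1^t h\in\ell^2$, and your example $h_n=n$ on the bi-infinite path shows it can fail. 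Fortunately the detour is unnecessary for the implication the theorem actually claims ($h\in\ker\Delta_0$ implies the mean value property), so both proofs land in the same place; yours is the cleaner route and also gets the multigraph off-diagonal entries right ($-m(v,w)$ rather than $-1$). The only point worth flagging in your write-up is that dividing by $\deg(v)$ silently assumes $v$ is not isolated; for an isolated vertex the equation at $v$ is vacuous and the ``mean'' is undefined, which is consistent with the statement but should be noted.
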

%Of course, we can check mean value property with a CW-complex X, but it has a messy interpretation.\\

\begin{proof}
Since $\partial_0=0$, $h \in \mathcal{H}_0(X)$ is equivalent to $h\in \ker{\partial^t_{1}}$. Note that $\ker{\partial^t_{1}}=\ker{\partial_{1}\partial^t_{1}}$. The matrix ${\partial_{1}\partial^t_{1}}$
is the number of adjacent vertices of $v$ at the entry $(v,v)$, $-1$ at the entry $(v,u)$ where $v, u$ are distinct vertices, and 0 at the other entries. Therefore, by writing down the equation $(\partial_{1}\partial^t_{1})h=0$, we get the mean value property.
\end{proof}

\begin{ex}
On a $\mathbb{Z} \times \mathbb{Z}$ mesh, we may consider its harmonic cycle of dimension 0. As in the following figure, 0-dimensional harmonic cycle is a vector with respect to the vertex basis of this mesh. We may observe the mean value property holds.
\begin{center}
\includegraphics[scale=0.3]{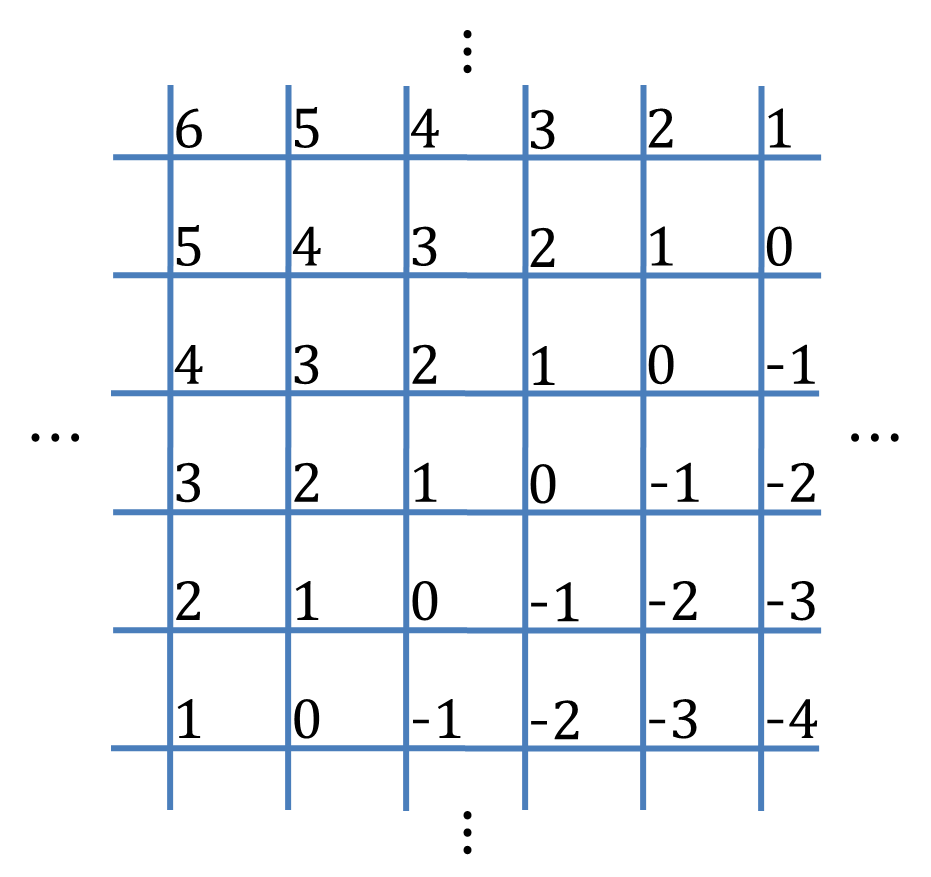}
\end{center}
\end{ex}
\begin{remark}
By reading the entries of $\bigtriangleup_i$, we can get the analogous mean value property at high dimensions.
\end{remark}

\section{Spanning trees and Cycletrees}
We refer the readers to \cite{BM} for basic definitions concerning graphs. 
In this paper, we assume that a graph $G$ is connected with loops and multiple edges allowed. Also we assume that its vertex set $V(G)$ has $n$ elements, and its edge set $E(G)$ is a multiset.  

\subsection{Spanning trees of a graph}
A subgraph of $G$ is \emph{spanning} if the vertex set of the subgraph equals $V(G)$. A \emph{spanning tree} $T$ in a connected graph $G$ is a spanning subgraph which is connected and has no cycle. One can show that every spanning tree has $n-1$ edges. 
%and we call $n-1$ the \emph{rank} and $c=m-(n-1)$ the \emph{corank} of $G$.  
We will denote the collection of all spanning trees in $G$ by $\mathcal{T}(G)$ and call the number of spanning trees in $G$ the \emph{tree-number} of $G$,  denoted by $k(G)$.

Given a graph $G$ and a non-loop edge $\sigma\in E(G)$, the \emph{contraction} $G/\sigma$ is a graph obtained by contracting $\sigma$, i.e., by removing $\sigma$ from $G$ and identifying its end vertices. In particular, we have $|V(G/\sigma)|=|V(G)|-1$ and $|E(G/\sigma)|=|E(G)|-1$. The \emph{deletion} $G-\sigma$ is obtained by deleting $\sigma$ from $G$. Hence, $|V(G-\sigma)|=|V(G)|$ and $|E(G-\sigma)|=|E(G)|-1$. If $\sigma$ is a loop, we define $G/\sigma=G-\sigma$ with no changes in the vertices. 

Let $\sigma\in E(G)$. If $\sigma$ is non-loop, then the subset of $\mathcal{T}(G)$ consisting of all spanning trees in $G$ that contain $\sigma$ corresponds bijectively to the set $\mathcal{T}(G/\sigma)$ of all spanning trees in $G/\sigma$, where the correspondence is given by the contraction of $\sigma$. If $\sigma$ is a loop, then $\mathcal{T}(G)$ corresponds bijectively to both $\mathcal{T}(G/\sigma)$ and $\mathcal{T}(G-\sigma)$. Hence, we have  
%Recall that for a non-loop edge $\sigma\in E(G)$, the deletion-contraction recurrence for the number $k(G)$ of the spanning trees in $G$ is 
%\begin{remark} 
\begin{equation}\label{del-con:T(G)}
\begin{array}{ll}
\mathcal{T}(G) \xleftrightarrow[]{bij.} \mathcal{T}(G-\sigma)\amalg \mathcal{T}(G/\sigma)
 & \condition[]{if $\sigma$ is not a loop}. \\
 \mathcal{T}(G/\sigma) \xleftrightarrow[]{bij.} \mathcal{T}(G) \xleftrightarrow[]{bij.} \mathcal{T}(G-\sigma)
 & \condition[]{if $\sigma$ is a loop}. 
\end{array}
\end{equation}
For the first case, we get
\begin{equation}\label{del-con:k(G)}
k(G)=k(G-\sigma)+k(G/\sigma).
\end{equation}

\subsection{Cycletrees of a graph}

%A cycletree is not a cycle, nor a tree, but it is a notion related to them. 

\begin{mydef}
Let $G$ be a connected graph. A cycletree $Y$ in $G$ is a connected spanning subgraph of $G$ with exactly one cycle. $\mathcal{U}(G)$ will denote the set of all cycletrees in G. 
\end{mydef}
Note that a cycletree $Y\in \mathcal{U}(G)$ can be expressed as a union  
\begin{equation}
Y=T\cup \{\sigma\}
\end{equation} 
of a spanning tree $T$  in $G$ and an edge $\sigma \in E(G)-E(T)$. Hence, a cycletree $Y\in \mathcal{U}(G)$ is a connected spanning subgraph of $G$ satisfying 
\begin{equation}\label{CTEuler}
|E(Y)|=|V(G)|.
\end{equation} 

For a cell complex $X$, a cycletree of $X$ will mean a cycletree of its 1-skeleton $X^{(1)}=X_{0}\cup X_{1}$, and $\mathcal{U}(X)$ will mean $\mathcal{U}(X^{(1)})$. In other literature, a cycletree is also called as a cycle-rooted spanning tree \cite{Ke}, or some co-tree \cite{Ca}. 

\begin{ex}
For a left given graph $G$, there is the list of cycletrees as a green graph. Also, for each cycletree, we can see the unique cycle of it as a red graph.
\begin{center}
\includegraphics[scale=0.3]{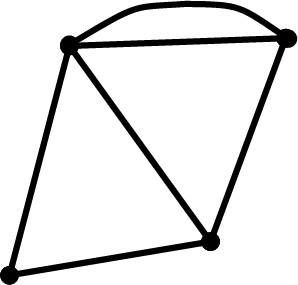}
\ \ \ \ \ \ \ \
\includegraphics[scale=0.15]{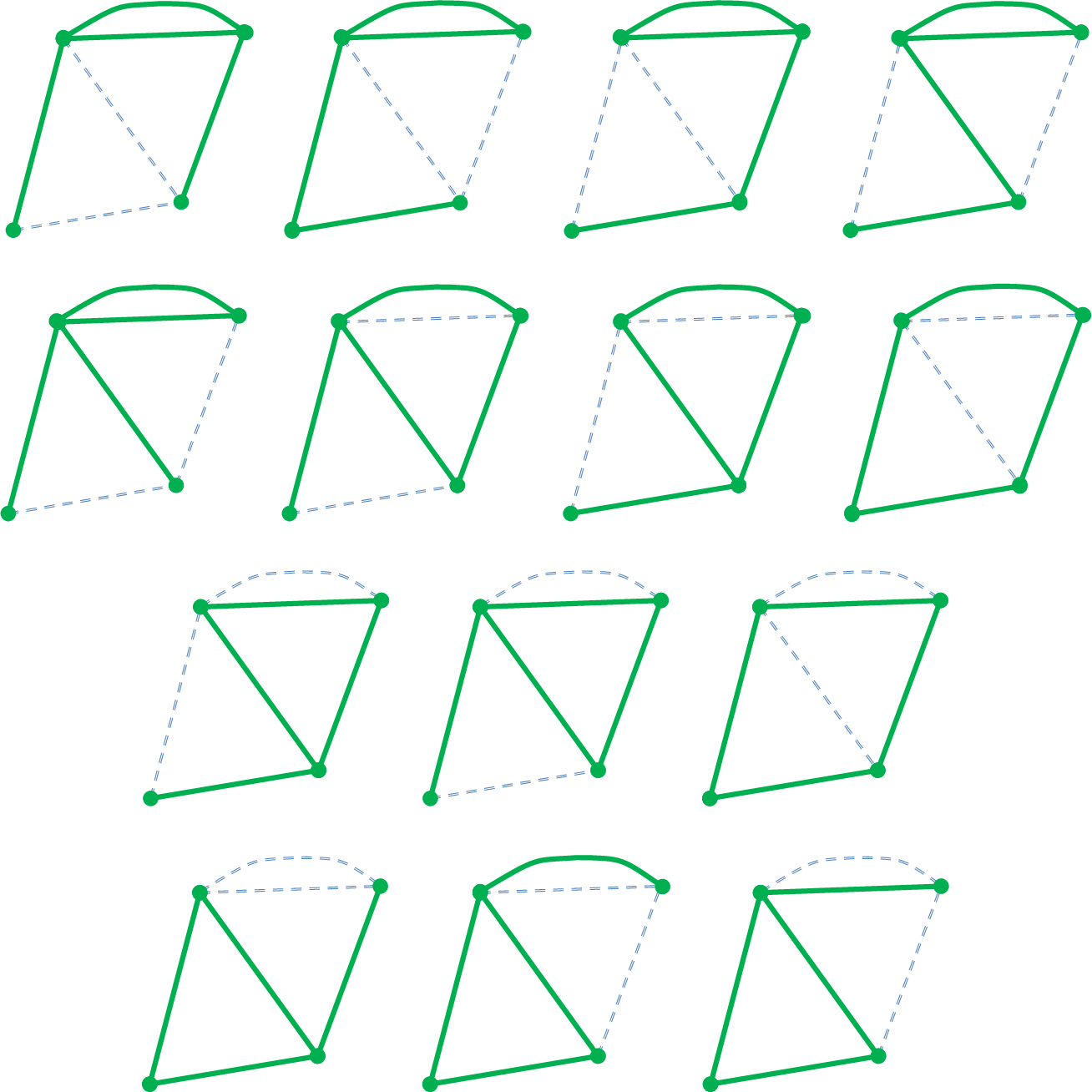}
\ \ \ \ \ \ \ \
\includegraphics[scale=0.15]{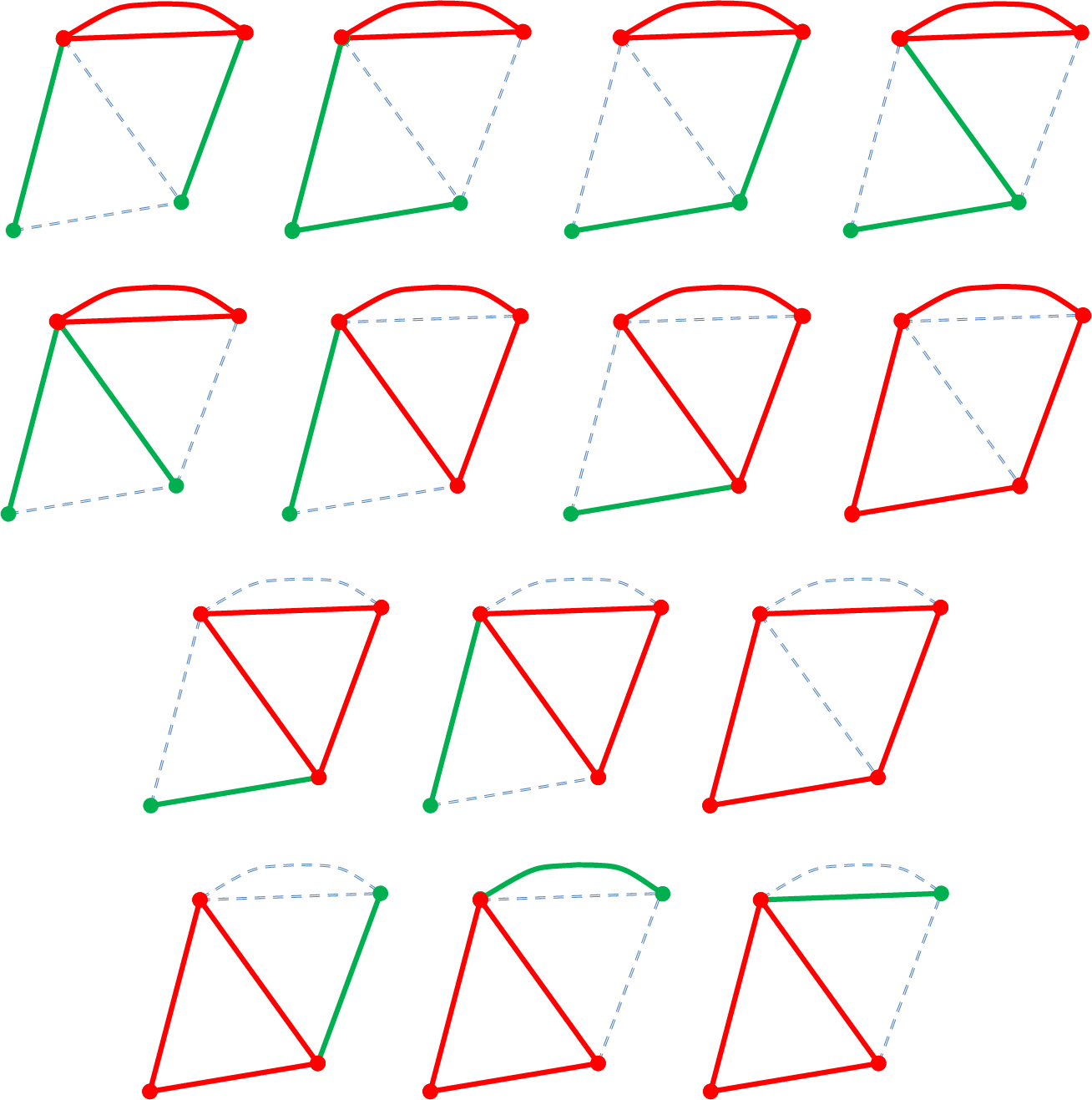}
\end{center}
\end{ex}

\

%\begin{mydef}
%Let $X$ be a CW-complex. A cycletree of $X$ is the cycletree of $X^1$ (i.e., $\mathcal{U}(X)\equiv \mathcal{U}(X^1)$).
%\end{mydef}
%We may consider a circuit, or closed path, as a cycle. In other words, a circuit can be represented as a vector in $Z_1(X)\leqslant A_1(X)$ composed of $0$ and $\pm 1$ entries. 

We will denote the unique cycle in a cycletree $Y\in\mathcal{U}(G)$ by $C_{Y}$ and its corresponding element in $Z_{1}(G)$ by $z_{Y}$.  Here, we assume that an orientation of $C_{Y}$ is fixed so that $z_{Y}$ is well defined. As we shall see, our main results are independent of these orientations.   

%\begin{equation}
%\mathcal{U}_{o}(X)=\{(Y,C_Y)|\ Y\ is\ a\ cycletree\ in\ X\}.
%\end{equation}
%Let's call $\mathcal{U}_{o}(X)$ as an oriented cycletree set. Sometimes, write $\mathcal{U}_{o}(X)=\{(Y,C_Y)\}$ for convenience.
%Note that the circuit $C_Y$ in $\mathcal{U}_{o}(X)(=\{(Y,C_Y)\})$ is usually used as a vector in $Z_1(X)$ in this paper.

\subsection{Deletion and contraction for cycletrees}

%With respect to cycletree, we can get another the deletion-contraction bijection between graphs. The original bijection is stated in the remark after the following proposition. Here, edge-deletion means a chosen edge is removed from a graph, while its endpoints remain on that graph. Edge-contraction means a chosen edge shrinks to be a vertex with its endpoints. 

\begin{lem}\label{CTcontraction} For a connected graph $G$  and $\sigma\in E(G)$,  
let $\mathcal{U}(G)_{\sigma}$ denote the set $\{Y\in \mathcal{U}(G)\mid \sigma\in Y\}$. If $\sigma$ is not a loop, the map from $\mathcal{U}(G)_{\sigma}$ to $\mathcal{U}(G/\sigma)$ given by $Y\mapsto Y/\sigma$ is a bijection. 
If $\sigma$ is a loop, the map from $\mathcal{U}(G)_{\sigma}$ to $\mathcal{T}(G/\sigma)$ given by $Y\mapsto Y-\sigma$ is a bijection. 
\end{lem}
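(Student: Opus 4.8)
The plan is to replace the definition of a cycletree with the equivalent intrinsic characterization implicit in (\ref{CTEuler}): a subgraph $Y\subseteq G$ is a cycletree if and only if it is connected, spanning, and satisfies $|E(Y)|=|V(G)|$. For a connected spanning subgraph the cycle rank equals $|E(Y)|-|V(Y)|+1=|E(Y)|-|V(G)|+1$, so the condition $|E(Y)|=|V(G)|$ says exactly that $Y$ carries a single independent cycle, i.e. exactly one cycle. Working with this edge-count criterion lets me certify the cycletree property by counting edges and checking connectivity, both of which behave predictably under deletion and contraction, and it avoids tracking the unique cycle $C_Y$ through a (non-unique) decomposition $Y=T\cup\{e\}$.

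For the non-loop case I would first check that $Y\mapsto Y/\sigma$ lands in $\mathcal{U}(G/\sigma)$. Contraction of a non-loop edge preserves connectivity, and since $Y$ is spanning with $\sigma\in Y$ its image $Y/\sigma$ is spanning in $G/\sigma$; moreover $|E(Y/\sigma)|=|E(Y)|-1=|V(G)|-1=|V(G/\sigma)|$, so by the characterization above $Y/\sigma$ is again a cycletree. This argument is uniform, requiring no case split on whether $\sigma$ lies on $C_Y$. I would then build the inverse explicitly using the canonical bijection between $E(G/\sigma)$ and $E(G)\setminus\{\sigma\}$ that remembers the original endpoints in $G$: given $Y'\in\mathcal{U}(G/\sigma)$, lift its edges through this bijection and adjoin $\sigma$ to get $Y\subseteq G$ with $\sigma\in Y$. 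Adjoining $\sigma$ reconnects the two vertices $u,v$ into which the contracted vertex was split, so $Y$ is connected and spanning, and $|E(Y)|=|E(Y')|+1=|V(G/\sigma)|+1=|V(G)|$, making $Y$ a cycletree. A direct computation shows $Y/\sigma=Y'$ and that contracting then un-contracting recovers the original cycletree, so the two maps are mutually inverse.

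For the loop case the argument is shorter. If $\sigma$ is a loop contained in a cycletree $Y$, then $\sigma$ is itself a cycle, and since $Y$ has exactly one cycle this loop must be it; hence $Y-\sigma$ is connected, spanning, and acyclic, i.e. a spanning tree of $G-\sigma=G/\sigma$, giving a well-defined map into $\mathcal{T}(G/\sigma)$. Conversely, adjoining the loop $\sigma$ to any $T\in\mathcal{T}(G/\sigma)$ produces a connected spanning subgraph with $|E|=|V(G)|$ whose unique cycle is $\sigma$, hence a cycletree containing $\sigma$. These operations are visibly inverse to one another.

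The step I expect to be the main obstacle is the non-loop inverse: when the contracted vertex is split back into $u$ and $v$, the lifted subgraph may become disconnected, so I must argue carefully that reinserting $\sigma$ restores connectivity (and does not create a second cycle). This is precisely why I prefer the edge-count characterization to reasoning about $C_Y$ directly, since well-definedness should not depend on any choice of the edge completing the cycle. Everything else reduces to bookkeeping with the Euler-type identity $b_1=|E|-|V|+c$ together with the fact that contraction of a non-loop edge lowers $|E|$ and $|V|$ each by one while preserving connectivity.
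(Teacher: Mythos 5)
Your proposal is correct and takes essentially the same approach as the paper: both proofs replace the definition by the characterization of a cycletree as a connected spanning subgraph with $|E(Y)|=|V(G)|$, use the canonical bijection between $E(G)\setminus\{\sigma\}$ and $E(G/\sigma)$, and verify membership in both directions by edge counting plus a connectivity check, with the loop case handled by noting $Y=T\cup\{\sigma\}$. The only cosmetic difference is that the paper establishes connectivity of the lift contrapositively (a disconnected $Y$ would contract to a disconnected $Y/\sigma$), which is the same observation as your reconnection-via-$\sigma$ argument.
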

\begin{proof} Note that there is a natural bijective map from $E(G)\setminus \sigma$ to $E(G/\sigma)$. This map induces a bijection $f$ from the set of all spanning subgraphs of $G$ containing $\sigma$ to the set of all spanning subgraphs of $G/\sigma$, where $f$ is given by contracting the edge $\sigma$. We shall see that the restriction of $f$ to $\mathcal{U}(G)_{\sigma}$ gives the desired bijections of the proposition. 
Recall that $Y\in \mathcal{U}(G)_{\sigma}$ iff $Y$ is a connected spanning subgraph of $G$ containing $\sigma$ such that $|E(Y)|=|V(G)|$.   

Suppose $\sigma$ is not a loop. Then, we have $|E(Y/\sigma)|=|E(Y)|-1=|V(G)|-1=|V(G/\sigma)|$, and $Y/\sigma$ is a connected spanning subgraph of $G/\sigma$. Therefore we conclude $Y/\sigma\in \mathcal{U}(G/\sigma)$.  Conversely, let $Y'\in \mathcal{U}(G/\sigma)$. Then, by definition, we have $Y'=Y/\sigma$ for some spanning subgraph $Y$ of $G$ containing $\sigma$. Further, $Y$ must be connected because otherwise $Y/\sigma$ would not be connected, a contradiction.  Since we have $|E(Y)|=|E(Y')|+1=|V(G/\sigma)|+1=|V(G)|$, we conclude $Y\in \mathcal{U}(G)_{\sigma}$, which proves the first statement of the lemma. 

Suppose $\sigma$ is a loop. Then, a cycle tree $Y\in \mathcal{U}(G)_{\sigma}$ is precisely of the form $Y=T\cup\sigma$ for some spanning tree $T\in \mathcal{T}(G)$. Hence, the map $Y\mapsto Y-\sigma=Y/\sigma$ gives a bijection from $Y\in \mathcal{U}(G)_{\sigma}$ to $\mathcal{T}(G-\sigma)=\mathcal{T}(G/\sigma)$, which proves the second statement of the lemma.   
\end{proof}

\begin{prop}% ( $\mathcal{U}$ - $\mathcal{T}$ bijection) 
%We can construct $G_{c} \equiv X/\sigma$ (contraction), $G_{d}\equiv X-\sigma$ (deletion). 
For a connected graph $G$ and $\sigma\in E(G)$, we have the following natural bijections: 
%from deletion and contraction with respect to $\sigma$.
\begin{equation}
\begin{array}{ll}
\mathcal{U}(G) \xleftrightarrow[]{bij.} \mathcal{U}(G/\sigma)\amalg \mathcal{U}(G-\sigma)
 & \condition[]{if $\sigma$ is not a loop}, \\
\mathcal{U}(G) \xleftrightarrow[]{bij.} \mathcal{T}(G/\sigma)\amalg \mathcal{U}(G-\sigma)
 & \condition[]{if $\sigma$ is a loop}. 
\end{array}
\end{equation}
\end{prop}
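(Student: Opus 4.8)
The plan is to partition $\mathcal{U}(G)$ according to whether or not the distinguished edge $\sigma$ appears in the cycletree, and then to identify each of the two resulting pieces with one of the summands on the right-hand side. Write $\mathcal{U}(G)_{\sigma}=\{Y\in\mathcal{U}(G)\mid \sigma\in Y\}$ as in Lemma \ref{CTcontraction}, and let $\mathcal{U}(G)_{\bar\sigma}=\{Y\in\mathcal{U}(G)\mid \sigma\notin Y\}$ be its complement, so that $\mathcal{U}(G)=\mathcal{U}(G)_{\sigma}\amalg\mathcal{U}(G)_{\bar\sigma}$ is a genuine disjoint union of subsets of $\mathcal{U}(G)$.

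The piece $\mathcal{U}(G)_{\sigma}$ is handled entirely by Lemma \ref{CTcontraction}: if $\sigma$ is not a loop it is in bijection with $\mathcal{U}(G/\sigma)$ via $Y\mapsto Y/\sigma$, and if $\sigma$ is a loop it is in bijection with $\mathcal{T}(G/\sigma)$ via $Y\mapsto Y-\sigma$. These two outcomes are exactly the first summand in the two cases of the proposition, so no further work is needed here.

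It remains to identify $\mathcal{U}(G)_{\bar\sigma}$ with $\mathcal{U}(G-\sigma)$ in both cases. I would argue this directly from the definitions: the natural inclusion of edge sets $E(G-\sigma)\hookrightarrow E(G)$ identifies a spanning subgraph $Y$ of $G-\sigma$ with the spanning subgraph of $G$ having the same edges, and under this identification the three defining conditions of a cycletree---being spanning (the vertex sets of $G$ and $G-\sigma$ coincide), being connected, and having exactly one cycle---are preserved in both directions. Hence $Y\in\mathcal{U}(G-\sigma)$ if and only if $Y$ is a cycletree of $G$ with $\sigma\notin Y$, i.e. $Y\in\mathcal{U}(G)_{\bar\sigma}$, giving the desired bijection. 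Combining this with the previous paragraph yields both stated bijections at once, since the second summand $\mathcal{U}(G-\sigma)$ is common to both cases.

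The one point requiring care---and what I expect to be the main (if modest) obstacle---is the boundary situation in which $G-\sigma$ fails to be connected, which happens precisely when $\sigma$ is a bridge. Since the definition of a cycletree demands a connected spanning subgraph, I would first record the convention that $\mathcal{U}(H)=\emptyset$ whenever $H$ is disconnected; this is forced, as a disconnected graph admits no connected spanning subgraph. With this convention the identification above remains valid: if $\sigma$ is a bridge then no cycletree of $G$ can avoid $\sigma$, because any spanning connected subgraph must re-connect the two components of $G-\sigma$, so $\mathcal{U}(G)_{\bar\sigma}=\emptyset=\mathcal{U}(G-\sigma)$ and the bijection holds vacuously. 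All remaining cases are covered by the direct argument, completing the proof.
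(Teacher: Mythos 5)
Your proposal is correct and follows essentially the same route as the paper: partition $\mathcal{U}(G)$ by whether $\sigma$ lies in the cycletree, apply Lemma \ref{CTcontraction} to the piece containing $\sigma$, and identify the complementary piece with $\mathcal{U}(G-\sigma)$. The paper simply declares this ``clear,'' so your explicit treatment of the bridge case (where both sides are empty) is a welcome but inessential addition.
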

\begin{proof}
From $\mathcal{U}(G)=\{Y\in \mathcal{U}(G)\mid \sigma\in Y\}\amalg\{Y\in \mathcal{U}(G)\mid \sigma\notin Y\}$, the proposition is clear by Lemma \ref{CTcontraction}. 
%Send the elements in $G$ to $G_{c}$ after contracting $\sigma$, if it contains $\sigma$. 
\end{proof}

\section{Winding number and harmonic cycle}
If $X=X_{0}\amalg X_{1}\amalg X_{2}$ has the property $\rk H_{1}(X)=1$, then we will call $X$ \emph{unicyclic}. For example, a connected planar graph with all of its finite faces ``filled in''  except one is unicyclic. If $X$ is unicyclic,  then we have $\rk \mathcal{H}_{1}(X)=1$ by combinatorial Hodge theory, i.e., there is a nonzero harmonic cycle $\lambda$ that spans $\mathcal{H}_{1}(X)$. The purpose of this section is to give an explicit description of $\lambda$ via cycletrees with the winding numbers of them when a connected graph is unicyclized.   

\subsection{Cycle space of a graph} 
Let $G$ be a graph. The chain group  $C_1=C_{1}(G)=\mathbb{Z}^{|E(G)|}$ is generated by the oriented edges $\{[e]\mid e \in E(G)\}$, and $C_0=C_{0}(G)=\mathbb{Z}^{|V(G)|}$ by the vertex set $V(G)$. 
An element $x\in C_{1}$ may be represented either as a column vector $x=(n_{e})_{e\in E(G)}$ or as a \emph{formal} sum $x=\sum_{e\in E(G)}n_{e}[e]$ with $n_{e}\in \mathbb{Z}$ for all $e\in E(G)$. The elements of $C_{0}$ will be represented similarly. 
The map $\partial_{1}=\partial_{1}(G):C_{1}\rightarrow C_{0}$ is the \emph{incidence matrix} of $G$ 
which is an integer matrix with the rows and columns indexed by $V(G)$ and $E(G)$, respectively.
Note that any column of $\partial_{1}$ indexed by a loop is a zero column. 

The \emph{cycle space} of $G$ is $\ker\partial_{1}=Z_{1}=H_1(G)$. If $C$ is a cycle as a subgraph of $G$, then we have $z=\sum_{e\in E(C)}\epsilon_{e} [e]\in Z_{1}$ by suitably choosing the coefficients $\epsilon_{e}=\pm 1$. It is well-known that the rank of the cycle space for a connected $G$ equals the \emph{corank} $|E(G)|-|V(G)|+1$ of $G$ (refer to \cite{Bi}).  In this case, an important basis for $Z_{1}$ is given as follows. Let $T$ be a spanning tree in $G$.  For each $e\in E(G)\setminus E(T)$, there is a unique cycle in $T\cup e$ which must contain $e$.  Let $z_{e}$ denote the element in $Z_{1}$ that corresponds to this cycle (with $\epsilon_{e}=1$). Then the collection $\{z_{e}\mid e\in E(G)\setminus E(T)\}$ is a basis for $Z_{1}$ (refer to \cite{Bi}).  Hence, every $z\in Z_{1}$ is written uniquely as 
\begin{equation}\label{basis}
z=\sum_{e\in E(G)\setminus E(T)}m_{e}z_{e}
\end{equation} 
where $m_{e}$ is the coefficient of $[e]$ in $z$ for $e\in E(G)\setminus E(T)$.   

\subsection{Winding number}

Let $G$ be a connected graph and $\partial_{1}$ its incidence matrix. A \emph{unicyclizer} of $G$ is an integer matrix $\partial$ such that 
\begin{itemize}
\item[(1)] $\ker\partial=0$
\item[(2)] $\partial_{1}\partial=0$
\item[(3)] $\rk(\ker\partial_{1}/\im\partial)=1$ . 
\end{itemize}
 
The pair $\mathscr{A}=(G,\partial)$ will be called a \emph{unicyclization} of $G$ to which we associate a chain complex $\{A_{i}, \partial_{i}\}$ where 
$A_{i}=C_{i}(G)$ for $i=0,1$ with $\partial_{1}=\partial_{1}(G)$, $A_{2}=\mathbb{Z}^{r}$ with $r=\rk\partial$, and $\partial_{2}=\partial$. Also, we have $\rk H_{1}(\mathscr{A})=1$. Further, we will adopt the following notations for $\mathscr{A}$:
$$\mathcal{T}(\mathscr{A})= \mathcal{T}(G),\quad \mathcal{U}(\mathscr{A})= \mathcal{U}(G),\quad \mbox{and} \quad k(\mathscr{A})= k(G)\,.$$

%\begin{mydef}
%The pair $\mathscr{A}=(G, \partial_2)$ is called an Aug-complex, where $G$ is a connected (directed) graph (thus, we may fix $\partial_1:A_1(G)=\mathbb{Z}^{m_1} \rightarrow A_0(G)=\mathbb{Z}^{m_0}$), an integer matrix $\partial_2:A_2\equiv\mathbb{Z}^{m_2} \rightarrow A_1(G)=\mathbb{Z}^{m_1}$ satisfies $\partial_1 \partial_2=0$.
%\end{mydef}
%\begin{mydef}
%Let $\mathscr{A}=(G, \partial_2)$ be an Aug-complex. Define $H(\mathscr{A})=\dfrac{\ker \partial_1 }{\im \partial_2}$ as the homology of $\mathscr{A}$. An Aug-complex $\mathscr{A}$ is called rank 1 if $\rk H(\mathscr{A})=1$.
%\end{mydef}
%
%First, we do not like to have ambient information of $A_1(G)$ to represent the elements of $\ker\partial_1 \leq A_1(G)$. In other words, we will use a basis to reduce ambient rows in $\partial_2$.
%
%\begin{mydef}
%The $\beta$ is called a basis of an Aug-complex $\mathscr{A}=(G, \partial_2)$ if $\beta$ is a $\mathbb{Z}$-basis of $\ker\partial_1$.
%\end{mydef}
\begin{ex}
For a CW-complex $X$ with $\rk H_1(X)=1$, there is a unicyclization $(X^1,\overline{\partial_2})$ where $\overline{\partial_2}$ a full-rank submatrix of $\partial_2$.
\end{ex}
Let $\beta$ be a basis of the cycle space $\ker\partial_{1}=Z_{1}=\mathbb{Z}^{m}$ ($m>0$). A cycle $z\in Z_1$ will be denoted $[z]_{\beta}\in\mathbb{Z}^{m}$ when it is written with respect to $\beta$. Similarly, $[\partial]_{\beta}$ means the  matrix whose columns are those of $\partial$ written with respect to $\beta$. From conditions (1) and (3) for $\partial$, it is clear that $[\partial]_{\beta}$ is $m\times (m-1)$ with linearly independent columns. 
%\begin{mydef}
%A cycle $C \in \ker\partial_1$ can be written $[C]_{\beta} \in \mathbb{Z}^m$ with respect to a basis $\beta$ where $m=\rk\ker\partial_1$.  %due to $\im\partial_2\leq\ker\partial_1$.
%\end{mydef}
%
%Second, we don't want ambient columns in the matrix form of $\partial_2$ with respect to the elementary column operations over $\mathbb{Z}$. Thus, we suggest the following definition. We can find a finite matrix $\overline{\partial_2}$ whose columns are vectors in $A_1(G)$ such that $\im\overline{\partial_2}=\im\partial_2$ and $\rk \overline{\partial_2}=$ (the number of columns of $\overline{\partial_2}$). We hope to use $\overline{\partial_2}$ instead of $\partial_2$. Note that $\overline{\partial_2}$ is unique upto the elementary column operations over $\mathbb{Z}$.
%
%\begin{mydef}
%An Aug-complex $\mathscr{A}=(G, \partial_2)$ is called reduced if $\rk \partial_2$ = [the number of the columns of $\partial_2$].
%\end{mydef}
%
%\begin{prop}\label{mat1}
%Let $\mathscr{A}=(G, \partial_2)$ be a reduced rank 1 Aug-complex with a basis $\beta$. Then $\partial_2$ can be represented as the matrix $[\partial_2]_{\beta}$ whose size is $m \times (m-1)$ where $m$ is the rank of $\ker \partial_1$.
%\end{prop}
%Due to the previous proposition \ref{mat1}, we can use the determinant to measure the winding number of a cycle $C\in\ker\partial_1$ in a homology.
We are ready to present the main definition of this section.
\begin{mydef} (Winding number of a cycle) Let $\mathscr{A}=(G, \partial)$ be a unicyclization of a connected graph $G$, and $\beta$ a basis of the cycle space $Z_{1}(G)$. We define the winding number $w_{\mathscr{A}}: Z_1(G) \rightarrow \mathbb{Z}$ relative to $\beta$ to be  
$$w(z)=w_{\mathscr{A}}(z)=\det([z]_{\beta},[\partial]_{\beta})\,.$$ 
\end{mydef}
%Note that this definition is independent of the choice of $\beta$.  
When a basis $\beta$ of $Z_{1}(G)$ is fixed, we may omit $\beta$ from the notation and write $w(z)=\det(z,\partial)$. Also, note that $w(z)=0$ for $z\in \im\partial$, and thus $w_{\mathscr{A}}(\cdot)=\det(\cdot, \partial): H_{1}(\mathscr{A}) \rightarrow \mathbb{Z}$ is well-defined. 
%(The meaning of winding number and the definition of torsion-free winding number map will be given later in Section (Winding number and torsion-free version)).
\subsubsection*{What winding number measures}
We can get the Smith normal form of $[\partial]_\beta$ as follows.
\begin{equation}
[\partial]_\beta=S\left( \begin{array}{c}
D  \\
0\cdots 0 
\end{array}
\right)T
\end{equation}
where $m=\rk Z_1(G)$, $S\in GL(m;\mathbb{Z})$, $T\in GL({m-1};\mathbb{Z})$, and $D$ is the diagonal matrix with diagonal entries $d_1,\cdots, d_{m-1}$ with conditions $d_i\mid d_{i+1}$ and $d_i>0$. Therefore, we get
\begin{equation}
H_1(\mathscr{A})\cong \mathbb{Z}_{d_1}\oplus \cdots \oplus \mathbb{Z}_{d_{m-1}} \oplus \mathbb{Z}
\end{equation}
Denote the size of its torsion part by $\tau=d_1 \times \cdots \times d_{m-1}$.

Let's expand $T$ to $\overline{T}\in GL(m;\mathbb{Z})$ by inserting 1 at $(m,m)$ entry and 0 otherwise. Therefore, we get ${|w_{\mathscr{A}}(C)|}={|\det([C]_\beta,[\partial]_\beta)|}={|\det([\partial]_\beta,[C]_\beta)|}={| \det(S^{-1})\cdot \det([\partial]_\beta,[C]_\beta)\cdot \det(\overline{T}^{-1})|}={| \det\left( \begin{array}{cc}
\begin{array}{c}
D \\
0 \cdots 0
\end{array} & S^{-1}[C]_\beta
\end{array}
\right)|}=\tau \cdot |$[the last entry of the vector $S^{-1}[C]_\beta$]$|$ because ${|\det(S)|}={|\det(T)|}=1$.
Now, we can see $w_{\mathscr{A}}(C)$ measure how many the given cycle $C$ turns around the $\mathbb{Z}$ part in homology. And we get the following proposition.
\begin{prop}
Let $\mathscr{A}=(G, \partial)$ be a unicyclization. We have a winding number map $w_{\mathscr{A}}:\ker\partial_1 \rightarrow\mathbb{Z}$. Then, the image of $w_{\mathscr{A}}$ is $\tau \mathbb{Z}$.
\end{prop}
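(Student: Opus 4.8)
The plan is to show both inclusions $\im w_{\mathscr{A}} \subseteq \tau\mathbb{Z}$ and $\tau\mathbb{Z} \subseteq \im w_{\mathscr{A}}$, using the Smith normal form computation already carried out immediately before the statement. That computation establishes, for any cycle $C \in \ker\partial_1$, the key identity
\begin{equation}\label{wnd:snf}
|w_{\mathscr{A}}(C)| = \tau \cdot \bigl|\,[\text{last entry of } S^{-1}[C]_\beta]\,\bigr|,
\end{equation}
where $S \in GL(m;\mathbb{Z})$ is the left factor in the Smith normal form of $[\partial]_\beta$ and $\tau = d_1 \cdots d_{m-1}$ is the size of the torsion part. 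I would take \eqref{wnd:snf} as the engine driving the whole argument and organize the proof around it.

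For the inclusion $\im w_{\mathscr{A}} \subseteq \tau\mathbb{Z}$, the identity does the work directly: as $C$ ranges over $\ker\partial_1$, the vector $[C]_\beta$ ranges over all of $\mathbb{Z}^m$ (since $\beta$ is a basis of $Z_1(G) = \mathbb{Z}^m$), so $S^{-1}[C]_\beta$ also ranges over $\mathbb{Z}^m$ because $S^{-1} \in GL(m;\mathbb{Z})$. Hence the last entry of $S^{-1}[C]_\beta$ is always an integer, and by \eqref{wnd:snf} every value $w_{\mathscr{A}}(C)$ is an integer multiple of $\tau$. First I would note that $w_{\mathscr{A}}(C)$ itself is an integer (it is a determinant of an integer matrix), and combine this with the sign-free identity to conclude $w_{\mathscr{A}}(C) \in \tau\mathbb{Z}$.

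For the reverse inclusion $\tau\mathbb{Z} \subseteq \im w_{\mathscr{A}}$, I would exhibit a cycle achieving the value $\pm\tau$, and then invoke linearity of $w_{\mathscr{A}}$ to reach every multiple. Since $S^{-1}$ is invertible over $\mathbb{Z}$, there exists $C_0 \in \ker\partial_1$ whose coordinate vector satisfies $S^{-1}[C_0]_\beta = e_m$, the $m$-th standard basis vector (concretely, take $[C_0]_\beta$ to be the last column of $S$). Then \eqref{wnd:snf} gives $|w_{\mathscr{A}}(C_0)| = \tau$, so $\pm\tau \in \im w_{\mathscr{A}}$. Because $w_{\mathscr{A}}(z) = \det([z]_\beta, [\partial]_\beta)$ is $\mathbb{Z}$-linear in its first argument, $w_{\mathscr{A}}(n C_0) = n\, w_{\mathscr{A}}(C_0)$ for every $n \in \mathbb{Z}$, so the full set $\tau\mathbb{Z}$ lies in the image. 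Together with the first inclusion this yields $\im w_{\mathscr{A}} = \tau\mathbb{Z}$.

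I do not anticipate a serious obstacle here, since the hard analytic content is the Smith normal form identity \eqref{wnd:snf}, which the paper has already derived. The only points requiring care are bookkeeping ones: tracking the sign ambiguity in \eqref{wnd:snf} so that one correctly concludes membership in $\tau\mathbb{Z}$ rather than a one-sided statement (resolved by linearity, since $\tau\mathbb{Z}$ is closed under negation), and confirming that the surjectivity of $z \mapsto [z]_\beta$ and of multiplication by $S^{-1}$ onto $\mathbb{Z}^m$ is genuinely used to guarantee that the last entry sweeps out all of $\mathbb{Z}$. If anything is delicate it is making explicit that $C_0$ is a bona fide element of $\ker\partial_1$ and not merely a formal coordinate vector, but this is immediate once $\beta$ is recognized as a basis of the integral cycle space.
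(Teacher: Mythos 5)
Your proposal is correct and follows essentially the same route as the paper: the paper's proof is the single observation that, in the identity $|w_{\mathscr{A}}(C)|=\tau\cdot|\text{last entry of }S^{-1}[C]_\beta|$, the vector $S^{-1}[C]_\beta$ ranges over all of $\mathbb{Z}^m$ because $S$ is unimodular and $\beta$ is a basis. You have merely spelled out the two inclusions and the explicit witness $C_0$ (the last column of $S$), which the paper leaves implicit.
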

\begin{proof}
Under the previous argument, $S^{-1}[C]_\beta$ can be any vector because $S$ is a basis transformation.
\end{proof}

\subsection{Standard harmonic cycle}
Let us introduce the main object of study in this paper. Recall that for a connected graph $G$, every $Y\in \mathcal{U}(G)$ contains a unique cycle $C_{Y}$, and  we will let $z_{Y}$ denote the correponding element in $Z_{1}(G)$.
\begin{mydef}(Standard harmonic cycle) Let $\mathscr{A}=(G, \partial)$ be a unicyclization of a connected graph $G$. The standard harmonic cycle of $\mathscr{A}$ is
\begin{equation}
\lambda_{\mathscr{A}} \equiv \sum_{Y\in \mathcal{U}(G)} w_{\mathscr{A}}(z_Y) \cdot z_Y
\end{equation}
as an element of $Z_{1}(G)=H_{1}(G)$.
\end{mydef}
Note that $\lambda_{\mathscr{A}} $ is independent of the orientations of $C_{Y}$ 
for all $Y\in \mathcal{U}(G)$. Indeed, for each $Y\in \mathcal{U}(G)$, the \emph{weighted} cycle $w(z_Y)\cdot z_Y$ is independent of the orientation of $C_{Y}$ because $w(z_Y)\cdot z_Y=\det(z_Y, \partial)\cdot z_Y=\det(-z_{Y}, \partial)\cdot (-z_{Y})=w(-z_Y)\cdot(-z_Y)$. 
%Later, we shall show that $\lambda_{\mathscr{A}}$ is indeed a harmonic cycle in that it is both a cycle and a cocycle.
\begin{prop}
Let $\mathcal{C}(G)$ be the set of all cycles in $G$. Under the previous notation, we have
\begin{equation}
\lambda_{\mathscr{A}}=\sum_{C\in \mathcal{C}(G)} k(G/C)\cdot w_{\mathscr{A}}([C]) \cdot [C]
\end{equation}
where the sum is over all cycles $C\in \mathcal{C}(G)$ together with $[C]\in Z_{1}(G)$, and $G/C$ is obtained by deleting all edges in $C$ and identifying all vertices in $C$ to a vertex.
\end{prop}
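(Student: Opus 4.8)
The plan is to reorganize the defining sum $\lambda_{\mathscr{A}}=\sum_{Y\in\mathcal{U}(G)} w_{\mathscr{A}}(z_Y)\,z_Y$ by grouping the cycletrees according to their unique cycle. Since every $Y\in\mathcal{U}(G)$ carries a single cycle $C_Y$, the set $\mathcal{U}(G)$ partitions as $\coprod_{C\in\mathcal{C}(G)}\{Y\in\mathcal{U}(G)\mid C_Y=C\}$, so that
\begin{equation*}
\lambda_{\mathscr{A}}=\sum_{C\in\mathcal{C}(G)}\ \sum_{\substack{Y\in\mathcal{U}(G)\\ C_Y=C}} w_{\mathscr{A}}(z_Y)\,z_Y.
\end{equation*}
The whole problem thus reduces to evaluating each inner sum, which amounts to counting the cycletrees whose unique cycle is a prescribed $C$.

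Next I would establish that this count is exactly $k(G/C)$ by exhibiting a bijection between $\{Y\in\mathcal{U}(G)\mid C_Y=C\}$ and $\mathcal{T}(G/C)$. This is the heart of the argument and is a direct analogue of Lemma~\ref{CTcontraction}, with a whole cycle contracted in place of a single edge. The natural edge identification $E(G)\setminus E(C)\to E(G/C)$ sends a cycletree $Y$ with $C_Y=C$ to the image of its non-cycle edges $E(Y)\setminus E(C)$, viewed as a subgraph of $G/C$; the inverse lifts a spanning tree $T'$ of $G/C$ to $C\cup T'$. To see the forward map lands in $\mathcal{T}(G/C)$, I would check three things: the image is connected and spanning because $Y$ is (every vertex off $C$ is joined to $C$ in $Y$, hence to the contracted vertex in $G/C$); it is acyclic because $C$ is the only cycle of $Y$, so contracting $C$ destroys that cycle and creates no other; and an edge count confirms the size. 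Indeed, $|E(Y)|=|V(G)|$ by \eqref{CTEuler}, and writing $\ell=|E(C)|$ for the length of $C$ (which equals its number of vertices), the image has $|V(G)|-\ell$ edges while $|V(G/C)|=|V(G)|-\ell+1$; hence the image has exactly $|V(G/C)|-1$ edges and is therefore a spanning tree. The reverse assignment is checked the same way: $C\cup T'$ is connected and spanning with $\ell+(|V(G/C)|-1)=|V(G)|$ edges, hence a cycletree by \eqref{CTEuler}, and since it contains the cycle $C$ its unique cycle must be $C$.

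Finally, I would collapse each inner sum. For every $Y$ with $C_Y=C$ the cycle $z_Y$ equals $[C]$ up to sign, and the weighted cycle is orientation-free: $w_{\mathscr{A}}(z_Y)\,z_Y=\det(z_Y,\partial)\,z_Y=\det([C],\partial)\,[C]=w_{\mathscr{A}}([C])\,[C]$, exactly as observed just after the definition of $\lambda_{\mathscr{A}}$. Hence each of the $k(G/C)$ terms in the inner sum equals the common value $w_{\mathscr{A}}([C])\,[C]$, so the inner sum is $k(G/C)\,w_{\mathscr{A}}([C])\,[C]$. Substituting back yields the claimed formula.

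The routine parts---orientation independence and the final substitution---are immediate. The step I expect to require the most care is the bijection $\{Y\mid C_Y=C\}\leftrightarrow\mathcal{T}(G/C)$: one must verify that contracting a full cycle (not merely an edge) preserves connectivity and destroys precisely the one cycle $C_Y$ without introducing parasitic cycles, and that chords of $C$---which become loops in $G/C$ and are thus excluded from every spanning tree---correspond exactly to the edges forbidden in a cycletree whose cycle is $C$. Making this correspondence airtight, including the loop and multiple-edge degeneracies permitted by our graph conventions, is the crux of the proof.
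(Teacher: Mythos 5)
Your proposal is correct and follows essentially the same route as the paper's own proof: partition $\mathcal{U}(G)$ by the unique cycle $C_Y$, establish the bijection $\{Y\mid C_Y=C\}\leftrightarrow\mathcal{T}(G/C)$ via contraction of $C$, and collapse each inner sum using orientation independence of $w_{\mathscr{A}}(z_Y)\cdot z_Y$. In fact you supply more detail than the paper (which states the bijection and omits the verification), particularly the edge-count and connectivity checks for the contraction map.
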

\begin{proof} Given $C\in\mathcal{C}(G)$, let $\mathcal{U}(G)_{C}=\{Y\in \mathcal{U}(G)\mid C_{Y}=C\}$.  The map $Y\mapsto Y/C$ defines a bijection from $\mathcal{U}(G)_{C}$ to $\mathcal{T}(G/C)$. Since $\mathcal{U}(G)=\amalg_{C\in\mathcal{C}(G)}\mathcal{U}(G)_{C}$, the proposition follows from the definition of $\lambda_{\mathscr{A}}$. Details will be omitted. 
\end{proof}
\begin{ex}
Given a connected cell complex $X$ with $\rk H_1(X)=1$, let $\partial$ be a matrix whose columns form a basis for $\im\partial_{2}$ for $X$. Then $\mathscr{A}=(X^{(1)}, \partial)$ is a unicyclization of the 1-skeleton $X^{(1)}$ of $X$ such that $H_1(X)=H_1(\mathscr{A})$.  Therefore, we can use and $w_{\mathscr{A}}$ and $\lambda_{\mathscr{A}} $.
\end{ex}

Later, we will show in Section (Main theorem A and B) that $\lambda_{\mathscr{A}}$ is a non-zero harmonic cycle in that it is both a cycle and a cocycle.

\begin{ex}
The leftmost picture is the standard harmonic cycle. The middle picture shows some cycletrees. The third figure shows their unique cycles (in red), oriented in the same direction. The sum of the red cycles on each edge is the standard harmonic cycle.

\begin{center}
\includegraphics[scale=0.25]{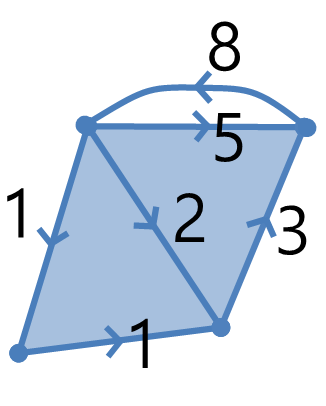},\ \ \ \
\includegraphics[scale=0.2]{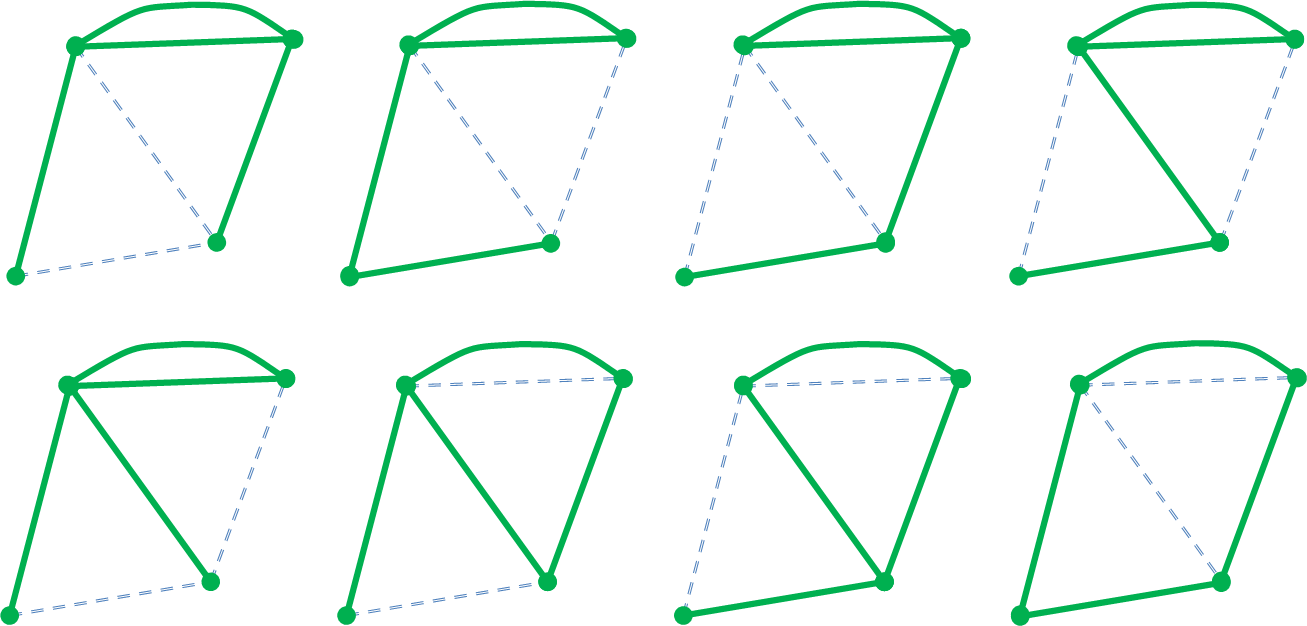}, \ \ \ \ \ 
\includegraphics[scale=0.2]{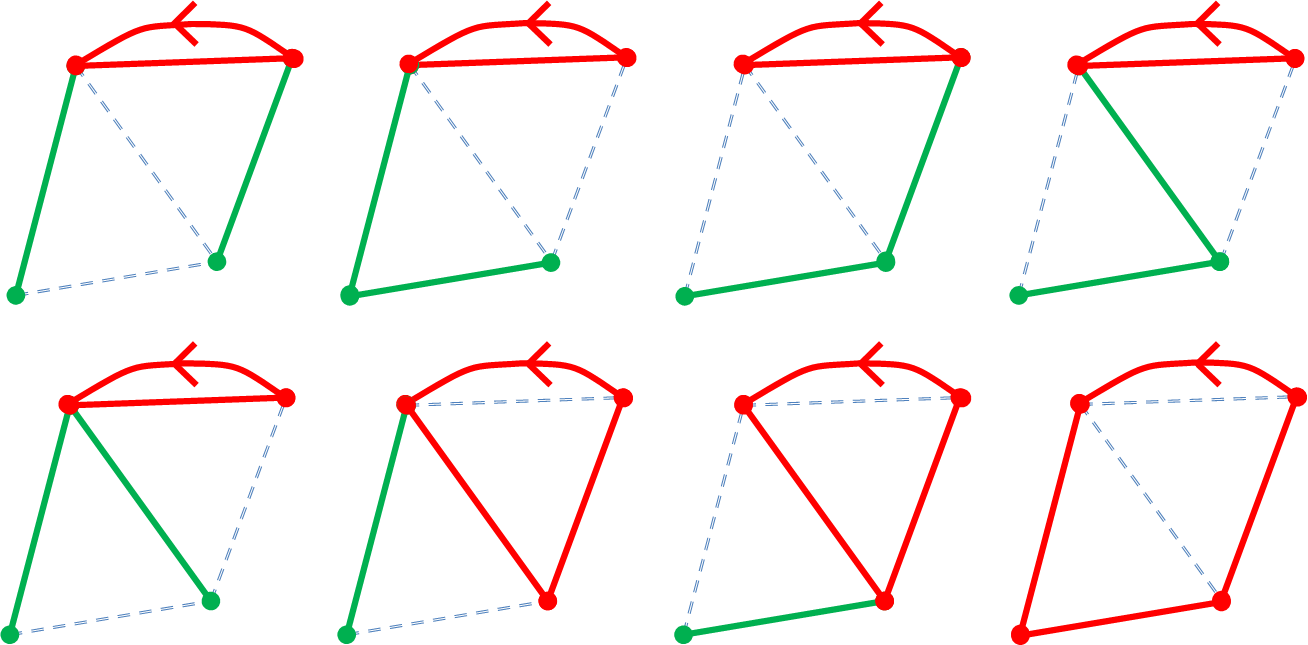}
\captionof{figure}{An example of standard harmonic cycle}
\end{center}

\begin{remark}
An example of a cycletree with zero winding number is given below. 
%at its the circuit parts like a cycletree in figure 2 for the convenience.
\end{remark}

\begin{center}
\includegraphics[scale=0.2]{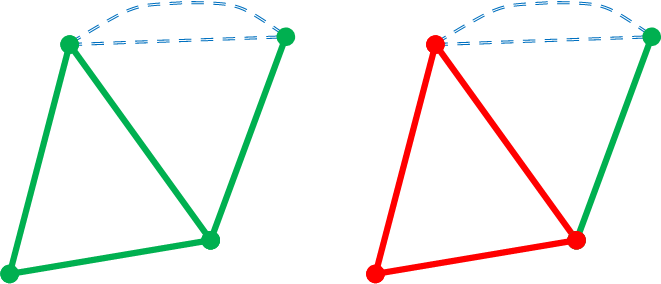}
\captionof{figure}{ The red circuit has zero winding number }
\end{center}
\end{ex}

\begin{ex}
We can collect cycletrees with the same unique cycle as in the below figure. $\#CT$ means the number of the cycletree with a given cycle. %This notation makes us not need to write every cycletree.
\begin{center}\includegraphics[scale=0.2]{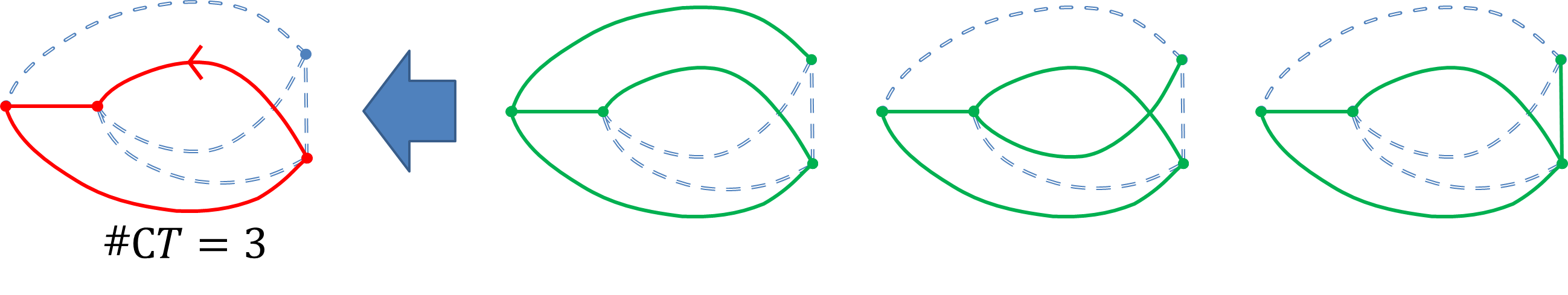} \\
\captionof{figure}{There are three cycletrees having the same cycle.}
\end{center}
Here is a CW-complex which shows that the winding number is an essential ingredient. $\#W$ stands for the winding number of a given cycle. Therefore, we get the following equation.
\begin{center}
\includegraphics[scale=0.15]{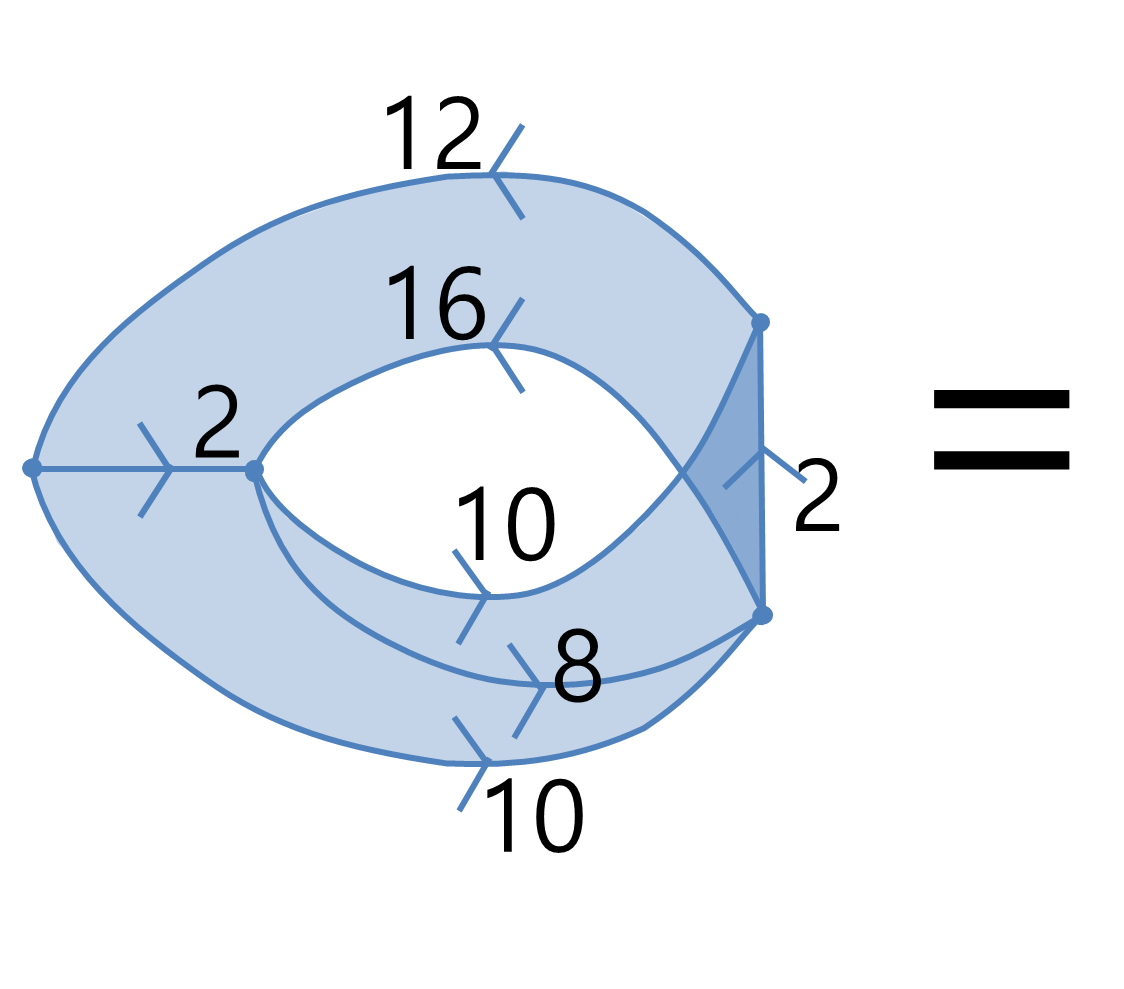}
\includegraphics[scale=0.14]{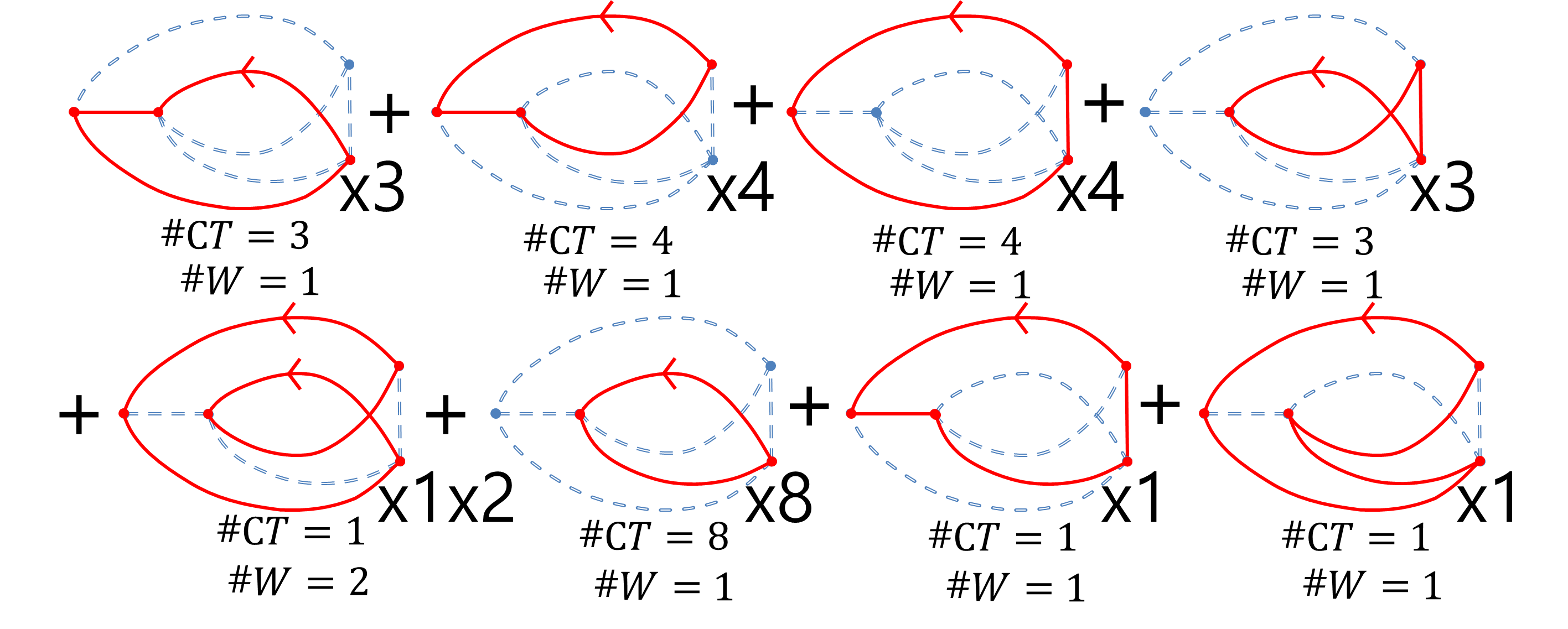}
\captionof{figure}{The standard harmonic cycle}
\end{center}
\end{ex}

\subsection{Relations between unicyclizations}
%In this subsection, we will discuss relations among .

%First, with an Aug-complex $\mathscr{A}=(G, \partial_2)$ with a basis $\beta$, we will show that the different choice of $\partial_2$, and $\beta$ doesn't change the original Aug-complex that much.

%\begin{notation}Write briefly an Aug-complex $\mathscr{A}=(G, \partial_2)$ having a fixed basis $\beta$ as $(G, \partial_2, \beta)$.\end{notation}

\begin{prop}\label{mat2}
Let $\mathscr{A}=(G,\partial)$ and $\mathscr{A}'=(G, \partial')$ be unicyclizations of a connected graph $G$ such that $\im \partial=\im\partial'$. Let $\beta$ and $\beta'$ be bases of $Z_{1}(G)$. Then $w_{\mathscr{A}}(\cdot)=\det(\cdot, [\partial]_{\beta})$ relative to $\beta$ and $w_{\mathscr{A}'}(\cdot)=\det(\cdot, [\partial']_{\beta'})$ relative to $\beta'$ are the same up to sign. Of course, $H_1(\mathscr{A})=H_1(\mathscr{A}')$.
\end{prop}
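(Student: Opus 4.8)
The plan is to reduce everything to a single change-of-basis statement at the level of lattices, and then to track two unimodular determinants.

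First I would record the structural fact that drives the proof. Because $\ker\partial=0$ (and likewise $\ker\partial'=0$), the columns of $\partial$ are $\mathbb{Z}$-linearly independent, so they form a basis of the sublattice $\im\partial\subseteq Z_{1}(G)$; the same holds for $\partial'$ and $\im\partial'$. The hypothesis $\im\partial=\im\partial'=:L$ then says that the columns of $\partial$ and of $\partial'$ are two bases of one and the same free abelian group $L$ of rank $m-1$. Since any two bases of a free $\mathbb{Z}$-module differ by a unimodular matrix, there is $P\in GL(m-1;\mathbb{Z})$ with $\partial'=\partial P$, and in particular $\det P=\pm1$.

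Next I would compute both winding numbers relative to the single basis $\beta$, so that $\partial$ enters only through $[\partial]_{\beta}$ and $\partial'$ only through $[\partial']_{\beta}=[\partial]_{\beta}P$. For any $z\in Z_{1}(G)$, the $m\times m$ matrix with first column $[z]_{\beta}$ and remaining columns $[\partial']_{\beta}$ factors as $([z]_{\beta}\mid [\partial']_{\beta})=([z]_{\beta}\mid [\partial]_{\beta})\,\mathrm{diag}(1,P)$, whence $\det([z]_{\beta},[\partial']_{\beta})=\det P\cdot\det([z]_{\beta},[\partial]_{\beta})$. Thus the winding numbers attached to $\partial$ and $\partial'$, both computed relative to $\beta$, agree up to the global sign $\det P$. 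Finally I would absorb the difference between $\beta$ and $\beta'$: letting $Q\in GL(m;\mathbb{Z})$ be the unimodular change-of-basis matrix carrying $\beta$-coordinates to $\beta'$-coordinates, so $[z]_{\beta'}=Q[z]_{\beta}$ and $[\partial']_{\beta'}=Q[\partial']_{\beta}$, I get $\det([z]_{\beta'},[\partial']_{\beta'})=\det Q\cdot\det([z]_{\beta},[\partial']_{\beta})$. Chaining the two identities yields $w_{\mathscr{A}'}(z)=\det Q\cdot\det P\cdot w_{\mathscr{A}}(z)$ for every $z$, with $\det Q\cdot\det P\in\{\pm1\}$ independent of $z$. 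The equality $H_{1}(\mathscr{A})=H_{1}(\mathscr{A}')$ is immediate since both equal $Z_{1}(G)/L$.

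The only real content is the first step, the passage from \emph{equal images} to \emph{columns related by a unimodular matrix}; everything after is determinant bookkeeping via column operations. The one point to state carefully is that the resulting sign $\det Q\cdot\det P$ is a single constant, so that the two maps genuinely agree up to one global sign rather than a sign that might vary with $z$.
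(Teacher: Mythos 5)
Your proof is correct and follows essentially the same route as the paper's: both arguments reduce the claim to two unimodular change-of-basis matrices (one for the lattice $\im\partial=\im\partial'$, one for the bases $\beta,\beta'$ of $Z_1(G)$) and multiply determinants. Your version is slightly tidier in that the factorization $([z]_{\beta}\mid[\partial']_{\beta})=([z]_{\beta}\mid[\partial]_{\beta})\,\mathrm{diag}(1,P)$ handles $z\in\im\partial$ uniformly, avoiding the paper's case split, and you make explicit up front that the sign $\det Q\cdot\det P$ is a single global constant, a point the paper only records later in the corollary.
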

\begin{proof}
If $z\in \im\partial=\im\partial'$, then $w_{\mathscr{A}}(z)=w_{\mathscr{A}'}(z)=0$. So suppose $z\notin \im\partial.$ Clearly, the matrices $M=([z]_{\beta'},[\partial]_{\beta'})$ and $M'=([z]_{\beta'},[\partial']_{\beta'})$ have the same column space, and the set $\alpha$ of columns of $M$ forms a basis of that space and the same is true for the set $\alpha'$ of columns of $M'$.  Hence
$$\det([z]_{\beta},[\partial]_{\beta})=\det B\cdot\det([z]_{\beta'},[\partial]_{\beta'})=\det B\cdot\det([z]_{\beta'},[\partial']_{\beta'})\cdot\det A$$
where $B$ is a change of basis matrix for $\beta$ and $\beta'$ and $A$ is a change of basis matrix for $\alpha$ and $\alpha'$. Since both $B$ and $A$ are invertible integer matrices, each has determinant $\pm1$. 
\end{proof}   

\begin{cor}
Under the same assumptions of Proposition \ref{mat2}, we have $w_{\mathscr{A}}(\cdot)=\pm w_{\mathscr{A'}}(\cdot)$ and $\lambda_{\mathscr{A}}=\pm \lambda_{\mathscr{A'}}$.
\end{cor}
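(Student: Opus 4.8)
The plan is to leverage Proposition \ref{mat2} directly, since the corollary is an almost immediate packaging of the two conclusions one gets from that proposition together with the definition of the standard harmonic cycle. First I would recall that Proposition \ref{mat2} establishes, under the hypotheses $\im\partial=\im\partial'$, that the two winding number maps $w_{\mathscr{A}}$ and $w_{\mathscr{A}'}$ (computed relative to possibly different bases $\beta$ and $\beta'$) agree up to a global sign. Concretely, the displayed chain of equalities in that proof shows $w_{\mathscr{A}}(z)=\det B\cdot\det A\cdot w_{\mathscr{A}'}(z)$ for every $z\notin\im\partial$, where $\det B=\pm1$ and $\det A=\pm1$ are determinants of integer change-of-basis matrices. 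The key point is that the scalar $\varepsilon:=\det B\cdot\det A\in\{+1,-1\}$ does \emph{not} depend on $z$: it is fixed once the bases $\beta,\beta'$ and the column-bases $\alpha,\alpha'$ are chosen. Hence the first conclusion $w_{\mathscr{A}}(\cdot)=\pm w_{\mathscr{A}'}(\cdot)$ is exactly the statement that this uniform sign exists, which is what Proposition \ref{mat2} gives.

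Next I would derive the statement about the standard harmonic cycles by substituting the sign relation into the defining sum. By definition,
\begin{equation}
\lambda_{\mathscr{A}}=\sum_{Y\in\mathcal{U}(G)}w_{\mathscr{A}}(z_Y)\cdot z_Y,
\qquad
\lambda_{\mathscr{A}'}=\sum_{Y\in\mathcal{U}(G)}w_{\mathscr{A}'}(z_Y)\cdot z_Y.
\end{equation}
Since both unicyclizations share the same underlying graph $G$, the index set $\mathcal{U}(G)$ and the cycles $z_Y$ are identical for the two sums. Applying $w_{\mathscr{A}}(z_Y)=\varepsilon\,w_{\mathscr{A}'}(z_Y)$ termwise and pulling the common scalar $\varepsilon$ out of the sum yields $\lambda_{\mathscr{A}}=\varepsilon\,\lambda_{\mathscr{A}'}=\pm\lambda_{\mathscr{A}'}$, with the \emph{same} sign $\varepsilon$ as for the winding number maps. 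This is the second conclusion. I should note here that the weighted cycles $w(z_Y)\cdot z_Y$ are already known to be independent of the chosen orientations of $C_Y$ (remarked just after the definition of $\lambda_{\mathscr{A}}$), so no orientation ambiguity enters, and the only freedom is the single overall sign $\varepsilon$.

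The only point requiring a moment's care — and the closest thing to an obstacle — is the claim that the sign $\varepsilon=\det B\cdot\det A$ is genuinely \emph{independent of} the cycle $z$ being evaluated, so that a single sign works simultaneously for all $z_Y$ in the sum. I would address this by observing that $B$ is the change-of-basis matrix between the fixed cycle-space bases $\beta$ and $\beta'$, which is manifestly independent of $z$. The matrix $A$ relates the column sets $\alpha=([z]_{\beta'},[\partial]_{\beta'})$ and $\alpha'=([z]_{\beta'},[\partial']_{\beta'})$; a priori this could depend on $z$. However, since $\im\partial=\im\partial'$, the columns of $[\partial]_{\beta'}$ and $[\partial']_{\beta'}$ span the same sublattice, and one can choose the change-of-basis matrix $A$ to act as the identity on the first coordinate (fixing $[z]_{\beta'}$) and as a fixed integer matrix $A_0$ — the transition between the column bases of $\partial$ and $\partial'$ — on the remaining coordinates; thus $\det A=\det A_0$ is a constant independent of $z$. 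With that established, $\varepsilon$ is a fixed element of $\{\pm1\}$ and both conclusions follow immediately. The remaining steps are purely formal and I would omit the routine verifications.
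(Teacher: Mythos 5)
Your proof is correct and takes essentially the same route as the paper: both arguments fix the sign $\varepsilon=\det B\cdot\det A$ produced in the proof of Proposition \ref{mat2}, observe that it does not depend on the cycle $z$ being evaluated, and then pull this common sign out of the defining sum for $\lambda_{\mathscr{A}}$. Your block-decomposition argument that $A$ fixes the first column and acts by a constant transition matrix $A_0$ on the remaining columns is a welcome elaboration of the step the paper dismisses as ``clear.''
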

\begin{proof} In the proof of the Proposition\ref{mat2}, the matrix $B$ is fixed, hence so is $\det B$. Also, it is clear that $A$ is the same for any choice of $z\in Z_{1}(G)$  for the first columns of $M$ and $M'$. Therefore $\det A$ is also fixed, and the results follow.
\end{proof}

%To choose a basis consistently without sign problem while changing $G$, we will define a basis of an Aug-complex $\mathscr{A}$ through the following proposition.

%\begin{prop} Let $\mathscr{A}=(G, \partial_2)$ be an Aug-complex. For a spanning tree $T$ of $G$, we define $\beta_T$ as an ordered set of oriented fundamental cycles (or circuits) with respect to the spanning tree $T$. Then, $\beta_T$ is a basis of $\ker\partial_1$. In other words, $\ker \partial_1= \mathbb{Z}c_1 \times \cdots \times \mathbb{Z}c_m( \cong \mathbb{Z}^m)$ where $\beta_T = \{c_1,\cdots,c_m \}$. \end{prop}

%\begin{proof}$T$ is a spanning tree in the graph $G$. And $c_1,\cdots,c_m$ in $O$ are all fundamental cycles with respect to the spanning tree $T$.\end{proof}

%The orientation on the edges of $G$ can be given arbitrarily, and it effects on a basis of $A_1(G)$ as the change of sign.

%The order of the set $\beta_T$ effects on the order of $\mathbb{Z}$ in $\mathbb{Z}^m=\mathbb{Z}\times \cdots \times \mathbb{Z}$, and the orientation of fundamental cycles does on the sign of $\mathbb{Z}$.

%\begin{ex} With a CW-complex $X$, we can construct an Aug-complex $\mathscr{A}=(X^1,\partial_2,\beta_T)$ by choosing a spanning tree $T$ and the order and orientation of its fundamental cycles. \end{ex}

The following propositions reveal the relation between two unicyclizations when one is constructed from the other by edge contraction or deletion.

For a convenience, shortly write a unicyclization $\mathscr{A}=(G, \partial)$ with a certain choice of a basis $\beta$ for $Z_1(G)$ as $\mathscr{A}=(G, \partial)$ with $\beta$. Moreover, let $G_c=G/\sigma$ and $G_d=G\setminus \sigma$ with the corresponding incidence matrices $\partial_{1,c}$ and $\partial_{1,d}$ for a given edge $\sigma$.

\begin{prop} (Contraction)
Let $\mathscr{A}=(G, \partial)$ with $\beta$ be a unicyclization and $\sigma$ be an edge in $G$. Assume $\sigma$ is not a loop. Then, there is a unicyclization $\mathscr{A}_c=(G_c, \partial_c)$ with $\beta_c$ such that $[C]_{\beta}=[C_c]_{\beta_c}$ and $w_{\mathscr{A}}(C)=w_{\mathscr{A}_c}(C_c)$ where $C_c\in A_1(G_c)$ is a vector deleted $\sigma$ entry from a vector $C\in \ker\partial_1$.
\end{prop}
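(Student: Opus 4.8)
The plan is to realize the edge contraction as a chain map and transport all the data ($\partial$ and $\beta$) through it. Let $f\colon C_1(G)\to C_1(G_c)$ be the linear map that deletes the $\sigma$-coordinate, using the natural bijection $E(G)\setminus\sigma\leftrightarrow E(G_c)$ of Lemma \ref{CTcontraction}; on coordinates this is precisely $C\mapsto C_c$. First I would check that $f$ is a chain map, i.e.\ $\partial_{1,c}f=f_0\partial_1$, where $f_0\colon C_0(G)\to C_0(G_c)$ identifies the two endpoints of $\sigma$. This is a short computation on generators: for an edge $e\neq\sigma$ both sides give the (possibly identified) boundary of $e$, and for $\sigma$ both sides vanish since its endpoints coincide in $G_c$. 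Consequently $f$ sends cycles to cycles, so $C_c=f(C)\in Z_1(G_c)$ as claimed.

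Next I would show that $f$ restricts to an isomorphism $Z_1(G)\xrightarrow{\sim}Z_1(G_c)$. Since $\sigma$ is not a loop, its two endpoints are distinct, so no nonzero multiple of $[\sigma]$ is a cycle; hence the only cycle annihilated by $f$ is $0$, and $f|_{Z_1(G)}$ is injective. Because contracting a non-loop edge leaves the corank $|E|-|V|+1$ unchanged, we have $\rk Z_1(G)=\rk Z_1(G_c)=m$, so injectivity upgrades to an isomorphism. I would then set $\beta_c:=f(\beta)$, a basis of $Z_1(G_c)$, and $\partial_c:=f\circ\partial$, which is well-defined since $\im\partial\subseteq Z_1(G)$ by condition (2).

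With this setup the verification that $\mathscr{A}_c=(G_c,\partial_c)$ is a unicyclization is routine. Condition (1), $\ker\partial_c=0$, holds because $f$ is injective on $\im\partial\subseteq Z_1(G)$ and $\ker\partial=0$. Condition (2), $\partial_{1,c}\partial_c=\partial_{1,c}f\partial=f_0\partial_1\partial=0$, follows from the chain-map identity. Condition (3) holds because the isomorphism $f|_{Z_1(G)}$ carries $\im\partial$ onto $\im\partial_c$ and therefore induces an isomorphism $Z_1(G)/\im\partial\cong Z_1(G_c)/\im\partial_c$, whose common rank is $1$. Finally, the two claimed equalities are immediate: since $f$ sends the basis $\beta$ to the basis $\beta_c$ in a coordinate-preserving way, applying $f$ to the $\beta$-expansion of $C$ and to the $\beta$-expansions of the columns of $\partial$ yields $[C_c]_{\beta_c}=[C]_\beta$ and $[\partial_c]_{\beta_c}=[\partial]_\beta$; hence $w_{\mathscr{A}_c}(C_c)=\det([C_c]_{\beta_c},[\partial_c]_{\beta_c})=\det([C]_\beta,[\partial]_\beta)=w_{\mathscr{A}}(C)$.

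The substantive step—the one place where the non-loop hypothesis on $\sigma$ is essential—is establishing that $f$ restricts to an isomorphism of cycle spaces; everything downstream, namely the three unicyclizer axioms and both equalities, is bookkeeping once the coordinate-preserving isomorphism $f\colon (Z_1(G),\beta)\to(Z_1(G_c),\beta_c)$ is in hand.
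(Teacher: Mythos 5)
Your proposal follows essentially the same route as the paper: both construct the isomorphism $Z_1(G)\cong Z_1(G/\sigma)$ induced by deleting the $\sigma$-coordinate, transport $\beta$ and $\partial$ across it, and observe that all coordinates (hence the determinant) are preserved. Your version is in fact more explicit than the paper's, which merely asserts the isomorphism ``because $\sigma$ is not a loop.''

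One step of your argument is stated with a justification that does not actually suffice: you claim that injectivity of $f|_{Z_1(G)}$ ``upgrades to an isomorphism'' because the two cycle spaces have the same rank $m$. Over $\mathbb{Z}$ an injective homomorphism between free abelian groups of equal rank need not be surjective (multiplication by $2$ on $\mathbb{Z}$ is the standard counterexample), and surjectivity genuinely matters here: if $\beta_c=f(\beta)$ only generated a finite-index subgroup of $Z_1(G_c)$, it would not be a legitimate basis for defining $w_{\mathscr{A}_c}$, and the coordinates $[C_c]_{\beta_c}$ of an arbitrary cycle could fail to be integral. The gap is easy to close. Either argue directly: given $z'\in Z_1(G_c)$, let $x_0\in C_1(G)$ have the same coordinates on $E(G)\setminus\sigma$ and $0$ on $\sigma$; then $f_0\partial_1 x_0=\partial_{1,c}z'=0$ forces $\partial_1 x_0=n([u]-[v])$ with $u,v$ the endpoints of $\sigma$, and subtracting the appropriate multiple of $[\sigma]$ produces a cycle in $G$ mapping to $z'$. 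Or, choose a spanning tree $T$ of $G$ containing $\sigma$ and note that $f$ carries the fundamental-cycle basis of $Z_1(G)$ relative to $T$ (as in equation (\ref{basis})) to the fundamental-cycle basis of $Z_1(G_c)$ relative to $T/\sigma$, which exhibits surjectivity at once. With that repair the rest of your argument, including the verification of the three unicyclizer axioms and the two claimed equalities, is correct.
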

\begin{proof}
From a natural map $G\rightarrow G_c=G/\sigma$, we get the isomorphism $\ker \partial_1 \cong \ker \partial_{1,c}$ because $\sigma$ is not a loop. By this isomorphism, we can find the corresponding basis $\beta_c$ of $\ker\partial_{1,c}$ to the basis $\beta$ of $\ker\partial_1$. Define $\partial_c$ by $[\partial_c]_{\beta_c}=[\partial]_\beta$. We can easily check unicyclization conditions for $\mathscr{A}_c$ due to the previous isomorphism. Moreover, we can show $[C]_{\beta}=[C_c]_{\beta_c}$ rigorously by using the basis \ref{basis} after changing $\beta$ with Proposition \ref{mat2}. Finally, $w_{\mathscr{A}}(C)=\det([C]_{\beta}, [\partial]_\beta)=\det([C_c]_{\beta_c}, [\partial_c]_{\beta_c})=w_{\mathscr{A}_c}(C_c)$.
\end{proof}

%\begin{remark} For a given $C \in \ker\partial_1$, define $C_c \in \ker\partial_{1,c}$ by $[C]_{\beta}=[C_c]_{\beta_c}$.\end{remark}

Let's introduce some notations. $\partial|_{\sigma}$ means the row vector of a matrix $\partial$ at the edge $\sigma$. Let $n_\sigma$ be the gcd of the entries of a vector $\partial|_{\sigma}$. Note that $\sigma$ is not a bridge if $\partial|_{\sigma} \neq \overrightarrow{0}$. Moreover, when $\partial|_{\sigma} = \overrightarrow{0}$, $\rk H_1(\mathscr{A})$ becomes 0 if $\sigma$ is removed.

\begin{prop} (Deletion)
Let $\mathscr{A}=(G, \partial)$ with $\beta$ be a unicyclization and $\sigma$ be an edge in $G$. Assume $\partial|_{\sigma} \neq \overrightarrow{0}$. Then, there is a unicyclization $\mathscr{A}_d=(G_d, \partial_d)$ with $\beta_d$ such that $[C]_{\beta}=[C_d]_{\beta_d}$ and $w_{\mathscr{A}}(C)=n_{\sigma}\cdot w_{\mathscr{A}_d}(C_d)$ where $C_d\in A_1(G_d)$ is a vector deleted $\sigma$ entry from a vector $C\in \ker\partial_1$ when $C$ has a zero coefficient at $\sigma$.
\end{prop}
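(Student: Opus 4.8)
The plan is to build $\mathscr{A}_d$ from a spanning tree that avoids $\sigma$, so that the winding-number determinant for $\mathscr{A}$ visibly splits off the row indexed by $\sigma$, with the $\gcd$ $n_\sigma$ emerging from a unimodular reduction of that row.

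First I would fix a spanning tree $T$ of $G_d=G\setminus\sigma$. Since $\partial|_{\sigma}\neq\overrightarrow{0}$, the edge $\sigma$ is not a bridge, so $G_d$ is connected and $T$ is also a spanning tree of $G$. Using $T$ in (\ref{basis}) gives the fundamental-cycle basis $\beta=\{z_e\mid e\in E(G)\setminus E(T)\}$ of $Z_1(G)$; removing the single element $z_\sigma$ leaves $\beta_d=\{z_e\mid e\in E(G_d)\setminus E(T)\}$, which is a basis of $Z_1(G_d)$ of rank one less, because deleting a non-bridge drops the corank $|E|-|V|+1$ by exactly one. By Proposition \ref{mat2} I may compute winding numbers relative to these adapted bases. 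For a cycle $C$ with zero coefficient at $\sigma$, the expansion (\ref{basis}) shows that the $z_\sigma$-coordinate of $[C]_\beta$ vanishes and that deleting it yields exactly $[C_d]_{\beta_d}$, giving the asserted identity $[C]_\beta=[C_d]_{\beta_d}$. The same reading of coordinates shows the row of $[\partial]_\beta$ indexed by $z_\sigma$ is precisely $\partial|_\sigma$, whose gcd is $n_\sigma$.

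Next I would isolate the gcd and define $\partial_d$. Since $\partial|_\sigma$ is a nonzero integer row vector with gcd $n_\sigma$, there is $U\in GL(m-1;\mathbb{Z})$ with $\partial|_\sigma\,U=(n_\sigma,0,\dots,0)$; replacing $\partial$ by $\partial U$ leaves $\im\partial$ unchanged and alters $w_{\mathscr{A}}$ only by a sign (Proposition \ref{mat2}), so I may assume the $z_\sigma$-row of $[\partial]_\beta$ equals $(n_\sigma,0,\dots,0)$. Then columns $2,\dots,m-1$ of $[\partial]_\beta$ have vanishing $z_\sigma$-coordinate, hence lie in $Z_1(G_d)$, and I define $\partial_d$ so that $[\partial_d]_{\beta_d}$ is the $(m-1)\times(m-2)$ matrix formed by these columns with their zero $z_\sigma$-row deleted. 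These $m-2$ columns are linearly independent and lie in $\ker\partial_{1,d}$, and $Z_1(G_d)/\im\partial_d$ has rank $(m-1)-(m-2)=1$, so $\mathscr{A}_d=(G_d,\partial_d)$ satisfies the three unicyclization conditions.

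Finally I would expand the determinant. With the above normalisation,
$$([C]_\beta,[\partial]_\beta)=\begin{pmatrix}[C_d]_{\beta_d} & p_1 & p_2 & \cdots & p_{m-1}\\ 0 & n_\sigma & 0 & \cdots & 0\end{pmatrix},$$
where $p_1,\dots,p_{m-1}$ are the $\beta_d$-parts of the columns of $[\partial]_\beta$. Laplace expansion along the last row leaves only the entry $n_\sigma$, whose complementary minor is $\det([C_d]_{\beta_d},p_2,\dots,p_{m-1})=\det([C_d]_{\beta_d},[\partial_d]_{\beta_d})=w_{\mathscr{A}_d}(C_d)$. Hence $w_{\mathscr{A}}(C)=\pm\,n_\sigma\,w_{\mathscr{A}_d}(C_d)$, the sign being absorbed into the orientation and ordering conventions already in force for winding numbers. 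The obstacle here is bookkeeping rather than depth: one must choose the basis so that $\sigma$ occupies a single coordinate and then watch $n_\sigma$ appear from the unimodular reduction of the $\sigma$-row; the one genuinely load-bearing point is that $\sigma$ is not a bridge, which is exactly what guarantees $G_d$ is connected and that the leftover columns define a unicyclization of $G_d$ of the correct rank.
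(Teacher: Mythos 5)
Your proposal is correct and follows essentially the same route as the paper: choose a spanning tree avoiding $\sigma$, work in the fundamental-cycle basis so that the $z_\sigma$-row of $[\partial]_\beta$ is $\partial|_\sigma$, perform a unimodular column reduction to isolate $n_\sigma$ in that row, define $\partial_d$ from the remaining columns, and expand the determinant. The only cosmetic difference is that the paper places $n_\sigma$ in the last column (so the block-triangular expansion yields $+n_\sigma$ directly), whereas you place it first and absorb the resulting sign, which is harmless given that winding numbers are already only determined up to sign by Proposition \ref{mat2}.
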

\begin{proof}
We can find a spanning tree $T$ such that $\sigma$ is not in $T$. Without loss of generality, we can assume $\beta$ is the set of the fundamental cycles with respect to $T$ by Proposition \ref{mat2}. Moreover, assume that the fundamental cycle corresponds to $\sigma$ be the last one in the order of $\beta$.

From a natural map $G_d=G\setminus \sigma \rightarrow G$, we have the isomorphism $\ker \partial_1 \cong \ker \partial_{1,d}\times \mathbb{Z}$  where $\mathbb{Z}$ stands for the lost fundamental cycle when $G$ becomes $G_d$. From this map, we get $\beta_d$ from $\beta$. By the column elementary operations over $\mathbb{Z}$ which does not change the sign, we can transform $[\partial]_{\beta}$ to $[\widehat{\partial}]_{\beta}$ such that
\begin{equation}
[\widehat{\partial}]_{\beta}=\left( \begin{array}{cc}
[\partial_{d}]_{\beta_{d}} & * \\
0\cdots 0  & n_\sigma
\end{array}
\right).
\end{equation}
Define $\partial_{d}$ by the previous equation. Conditions for $\mathscr{A}$ to be a unicyclization can be easily checked. Moreover, $[C]_{\beta}=[C_d]_{\beta_d}$ can be easily checked due to the basis \ref{basis}. Finally,
\begin{equation}
\det([C]_{\beta},[\partial]_{\beta})=\det([C]_{\beta},[\widehat{\partial}]_{\beta})=n_\sigma\cdot \det([C_d]_{\beta_d},[\partial_{d}]_{\beta_d})
\end{equation}
where the entry of $C\in \ker \partial_1$ at $\sigma$ is zero.
\end{proof}

\begin{mydef}
Let's call the previous gcd $n_\sigma$ as the winding difference. 
\end{mydef}

\subsection{Relations between standard harmonic classes}
In this subsection, we will observe how the sub-parts of the standard harmonic cycle are related with respect to the inner product when a unicyclization has an edge contraction and deletion. This observation will be used in the proof of the main theorem.
\\

Now, we will divide the standard harmonic cycle into two parts with respect to an edge $\sigma$ in a unicyclization $\mathscr{A}$. 
\begin{equation}
\lambda_{\mathscr{A}} \equiv \lambda (\mathscr{A}) = \lambda_\sigma (\mathscr{A})+\lambda_{-\sigma}(\mathscr{A})
\end{equation}
where
\begin{equation}
\lambda_\sigma (\mathscr{A}) \equiv \sum_{Y\in \mathcal{U}(G), \sigma \in Y} w_{\mathscr{A}}(z_Y) \cdot z_Y,
\end{equation}
\begin{equation}
\lambda_{-\sigma} (\mathscr{A}) \equiv \sum_{Y\in \mathcal{U}(G), \sigma \notin Y} w_{\mathscr{A}}(z_Y) \cdot z_Y.
\end{equation}

\begin{prop}
Let $\mathscr{A}=(G, \partial)$ with $\beta$ be a unicyclization and $\sigma$ be an edge of $G$.
\begin{enumerate}
\item If $\sigma$ is a loop, then $\lambda_{\sigma}(\mathscr{A})=m\cdot [\sigma]\in A_1(G)$ for some $m\in \mathbb{Z}$.
\item If $\sigma$ is not a loop, then $[\lambda_{\sigma} (\mathscr{A})]_{\beta}=[\lambda(\mathscr{A}_c)]_{\beta_c}$.
\item If $\partial|_{\sigma}=\overrightarrow{0}$, then $\lambda_{-\sigma}(\mathscr{A})=0$.
\item If $\partial|_{\sigma} \neq \overrightarrow{0}$, then $[\lambda_{-\sigma} (\mathscr{A})]_{\beta}=n_\sigma \cdot [\lambda(\mathscr{A}_d)]_{\beta_d}$, where $n_\sigma$ is the winding difference.
\end{enumerate}
\end{prop}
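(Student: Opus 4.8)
The plan is to treat the four items separately, since items (1)--(2) describe the ``contracted'' part $\lambda_\sigma(\mathscr{A})$ and items (3)--(4) the ``deleted'' part $\lambda_{-\sigma}(\mathscr{A})$. Throughout I would lean on three ingredients that are already available: the cycletree bijections of Lemma \ref{CTcontraction} and the Proposition following it; the matching of winding numbers and $\beta$-coordinates supplied by the Contraction and Deletion propositions; and the determinantal fact that $w_{\mathscr{A}}(z)=\det([z]_\beta,[\partial]_\beta)$ vanishes exactly when $[z]_\beta$ lies in the rational span of the columns of $[\partial]_\beta$. The recurring mechanical step is to reindex a sum over cycletrees of $G$ by the corresponding cycletrees of $G_c$ or $G_d$, keeping in mind that each weighted cycle $w_{\mathscr{A}}(z_Y)\cdot z_Y$ is independent of the chosen orientation of $C_Y$, so no sign ambiguities survive the reindexing.

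For item (1), when $\sigma$ is a loop it is itself a cycle, so any cycletree $Y$ with $\sigma\in Y$ must have $C_Y=\sigma$, since a cycletree carries a single cycle. Hence $z_Y=\pm[\sigma]$, and each weighted term $w_{\mathscr{A}}(z_Y)\cdot z_Y$ equals the common value $w_{\mathscr{A}}([\sigma])\cdot[\sigma]$; summing over the $k(G/\sigma)$ such cycletrees (the loop case of Lemma \ref{CTcontraction}) gives $m\cdot[\sigma]$ with $m=k(G/\sigma)\,w_{\mathscr{A}}([\sigma])$. For item (2), I would use the bijection $Y\mapsto Y/\sigma$ from $\mathcal{U}(G)_{\sigma}$ onto $\mathcal{U}(G_c)$ of Lemma \ref{CTcontraction}. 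Writing $z_{Y/\sigma}$ for the unique cycle of $Y/\sigma$, which is exactly $z_Y$ with its $\sigma$-entry deleted, the Contraction proposition yields $[z_Y]_\beta=[z_{Y/\sigma}]_{\beta_c}$ and $w_{\mathscr{A}}(z_Y)=w_{\mathscr{A}_c}(z_{Y/\sigma})$. Applying $[\cdot]_\beta$ to the defining sum of $\lambda_\sigma(\mathscr{A})$ and reindexing along this bijection turns it term by term into the defining sum of $[\lambda(\mathscr{A}_c)]_{\beta_c}$.

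For item (4), the cycletrees with $\sigma\notin Y$ are exactly the elements of $\mathcal{U}(G_d)$, and for each of them $z_Y$ has zero $\sigma$-entry, so the Deletion proposition applies and gives $[z_Y]_\beta=[z_{Y,d}]_{\beta_d}$ together with $w_{\mathscr{A}}(z_Y)=n_\sigma\,w_{\mathscr{A}_d}(z_{Y,d})$, where $z_{Y,d}$ is the unique cycle of $Y$ regarded as a cycletree in $G_d$. Pulling the common factor $n_\sigma$ out of the sum and reindexing over $\mathcal{U}(G_d)$ produces $n_\sigma\cdot[\lambda(\mathscr{A}_d)]_{\beta_d}$. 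Item (3) is the one place where I would split into cases. If $\sigma$ is a bridge, then no connected spanning subgraph of $G$ can omit $\sigma$, so the index set of $\lambda_{-\sigma}(\mathscr{A})$ is empty and the sum is $0$. If $\sigma$ is not a bridge, then the cycles of $G$ with zero $\sigma$-entry form a space of rank $m-1$ (deleting a non-bridge lowers the corank by one); since $\partial|_\sigma=\overrightarrow{0}$, all $m-1$ independent columns of $\partial$ lie in this space and hence form a rational basis of it. Every $z_Y$ with $\sigma\notin Y$ has zero $\sigma$-entry, so it is a rational combination of the columns of $\partial$, whence $w_{\mathscr{A}}(z_Y)=\det([z_Y]_\beta,[\partial]_\beta)=0$ and therefore $\lambda_{-\sigma}(\mathscr{A})=0$.

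Items (1), (2) and (4) are essentially bookkeeping on top of the already-established correspondences, so I expect no real difficulty beyond carefully aligning the defining sums. The main obstacle is item (3): the tempting shortcut is to invoke ``removing $\sigma$ kills $H_1$'' uniformly, but that statement fails precisely when $\sigma$ is a bridge, where instead the vanishing is purely combinatorial because no cycletree omits a bridge. Getting item (3) right therefore requires separating the bridge case from the rank argument, and in the latter verifying that the columns of $\partial$ really do span, over $\mathbb{Q}$, the full space of $\sigma$-avoiding cycles; this is where the hypotheses $\ker\partial=0$, $\rk(\ker\partial_1/\im\partial)=1$, and $\partial|_\sigma=\overrightarrow{0}$ all get used at once.
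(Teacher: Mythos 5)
Your proposal is correct and follows essentially the same route as the paper: items (2) and (4) are the cycletree bijections of Lemma \ref{CTcontraction} combined with the identities $w_{\mathscr{A}}=w_{\mathscr{A}_c}$ and $w_{\mathscr{A}}=n_\sigma\cdot w_{\mathscr{A}_d}$ from the Contraction and Deletion propositions, while (1) and (3) follow from the definitions. Your treatment of item (3) — separating the bridge case (empty index set) from the non-bridge case (where the columns of $\partial$ rationally span the $\sigma$-avoiding cycle space, forcing every $w_{\mathscr{A}}(z_Y)$ to vanish) — is a welcome elaboration of a step the paper's one-line proof leaves implicit.
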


\begin{proof}
We can check Statement 1 and 3 from their definitions. Statement 2 and 4 hold because $w_{\mathscr{A}} \simeq w_{\mathscr{A}_c}$ and $w_{\mathscr{A}} \simeq n_\sigma \cdot w_{\mathscr{A}_d}$. %To be specific, $w_{\mathscr{A}}(C)=w_{\mathscr{A}_c}(C_c)$ where $C\in \beta_T$ and $C_c$ is the corresponding circuit in $B_{T,c}$. $w_{\mathscr{A}}(C)=w_{\mathscr{A}_d}(C_d)$ where $C\in \beta_T$ and $C_d\in B_{T,d}$. 
\end{proof}

\begin{cor}\label{cor1}
Let $\mathscr{A}=(G, \partial)$ with $\beta_T$ be a unicyclization, $C$ be a cycle in $G$ and $\sigma$ be an edge in $G\setminus C$.
\begin{enumerate}
\item If $\sigma$ is a loop, then $C\circ \lambda_{\sigma} (\mathscr{A})=0$,
\item If $\sigma$ is not a loop, $C\circ \lambda_{\sigma} (\mathscr{A})=C_c\circ \lambda(\mathscr{A}_c)$,
\item If $\partial|_{\sigma} = \overrightarrow{0}$, then $C\circ \lambda_{-\sigma} (\mathscr{A})=0$,
\item If $\partial|_{\sigma} \neq \overrightarrow{0}$, then $C\circ \lambda_{-\sigma} (\mathscr{A})=n_\sigma \cdot C_d\circ \lambda(\mathscr{A}_d)$,

\end{enumerate}
where $C_c$ is a cycle (or the sum of two cycles) in $\mathscr{A}_c$ and $C_d$ is a cycle in $\mathscr{A}_d$ and $n_\sigma$ is the winding difference.
\end{cor}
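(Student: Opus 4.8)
The plan is to derive each of the four identities by pairing the matching statement of the preceding Proposition against $C$ in the standard inner product $\circ$, the whole point being that $\sigma\notin C$ forces the $\sigma$-coordinate of $C$ to vanish. I would first clear the two degenerate cases. In Statement~1 the Proposition gives $\lambda_{\sigma}(\mathscr{A})=m\cdot[\sigma]$, and since $C$ has zero coefficient at $\sigma$ we get $C\circ\lambda_{\sigma}(\mathscr{A})=m\,\bigl(C\circ[\sigma]\bigr)=0$. In Statement~3 the Proposition gives $\lambda_{-\sigma}(\mathscr{A})=0$ directly, so $C\circ\lambda_{-\sigma}(\mathscr{A})=0$ is immediate.

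For Statements~2 and~4 the real work is to promote the coordinate identities $[\lambda_{\sigma}(\mathscr{A})]_{\beta}=[\lambda(\mathscr{A}_{c})]_{\beta_{c}}$ and $[\lambda_{-\sigma}(\mathscr{A})]_{\beta}=n_{\sigma}[\lambda(\mathscr{A}_{d})]_{\beta_{d}}$, which live in cycle-space coordinates, into equalities of honest edge-vectors, since $\circ$ is the edge inner product and not the coordinate one. The key observation I would record is that the isomorphism $\ker\partial_{1}\cong\ker\partial_{1,c}$ from the Contraction Proposition is literally ``delete the $\sigma$-entry'' on the edge space: writing $\pi$ for this coordinate-deletion map, $\pi$ is a linear isomorphism of cycle spaces and $\beta_{c}=\pi(\beta)$ by construction, so $[\pi(\lambda_{\sigma}(\mathscr{A}))]_{\beta_{c}}=[\lambda_{\sigma}(\mathscr{A})]_{\beta}=[\lambda(\mathscr{A}_{c})]_{\beta_{c}}$ forces $\pi(\lambda_{\sigma}(\mathscr{A}))=\lambda(\mathscr{A}_{c})$ as vectors in $A_{1}(G_{c})$. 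For the deletion case I would note that every cycletree summand of $\lambda_{-\sigma}(\mathscr{A})$ avoids $\sigma$, so $\lambda_{-\sigma}(\mathscr{A})$ already has zero $\sigma$-coordinate and sits inside $A_{1}(G_{d})\hookrightarrow A_{1}(G)$; the same basis-correspondence argument (now with the inclusion $\ker\partial_{1,d}\hookrightarrow\ker\partial_{1}$ carrying $\beta_{d}$ to the $\sigma$-free part of $\beta$) upgrades the coordinate identity to $\lambda_{-\sigma}(\mathscr{A})=n_{\sigma}\,\lambda(\mathscr{A}_{d})$ as edge-vectors.

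With these edge-vector identities in hand the two computations are routine and parallel. Because edges are orthonormal for $\circ$ and $C_{\sigma}=0$, the $\sigma$-term of each sum disappears, and
\[
C\circ\lambda_{\sigma}(\mathscr{A})=\sum_{e\neq\sigma}C_{e}\,\lambda_{\sigma}(\mathscr{A})_{e}=\sum_{e\in E(G_{c})}(C_{c})_{e}\,\lambda(\mathscr{A}_{c})_{e}=C_{c}\circ\lambda(\mathscr{A}_{c}),
\]
which is Statement~2; here $C_{c}=\pi(C)$ is $C$ with its (zero) $\sigma$-coordinate removed, a single cycle or a wedge of two cycles according to whether both endpoints of $\sigma$ lie on $C$, as the statement anticipates. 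Statement~4 is the identical computation with the scalar $n_{\sigma}$ pulled out and $C_{d}=C$ read inside $G_{d}$. I would also remark that, as already observed for $\lambda_{\mathscr{A}}$, each $w_{\mathscr{A}}(z_{Y})\,z_{Y}$ is orientation-independent, so none of the identities depends on the chosen orientations of the $C_{Y}$.

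The step I expect to be the main obstacle is exactly this promotion from cycle-space coordinates to edge-vectors in the second paragraph: one must be sure that the fundamental-cycle basis $\beta$ and its images $\beta_{c},\beta_{d}$ genuinely correspond under $\sigma$-deletion, which in turn requires aligning the spanning tree $T$ with $\sigma$ (it is harmless to do so, adjusting $\beta$ by Proposition~\ref{mat2} if needed, since $\circ$ is basis-free). Once that alignment is in place the contraction map sends each fundamental cycle $z_{e}$ to the corresponding fundamental cycle of the contracted tree and the deletion keeps the $\sigma$-free fundamental cycles inside $G_{d}$, so the translation is forced and the inner-product identities follow.
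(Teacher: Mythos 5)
Your proposal is correct and follows the route the paper intends: the paper states this corollary without proof as an immediate consequence of the preceding Proposition, and your argument is exactly that derivation, with the useful extra care of upgrading the coordinate identities $[\lambda_{\sigma}(\mathscr{A})]_{\beta}=[\lambda(\mathscr{A}_c)]_{\beta_c}$ and $[\lambda_{-\sigma}(\mathscr{A})]_{\beta}=n_\sigma[\lambda(\mathscr{A}_d)]_{\beta_d}$ to edge-vector identities before pairing with $C$, using that $\sigma\notin C$ kills the $\sigma$-coordinate. No gaps.
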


\section{Main theorem A and B}

Now, we will state and prove a unicyclization version of the harmonic cycle decomposition theorem.

%\subsection{Statement of main theorem A and B}
%Let $X$ be a connected finite CW-complex with $\rk H_1(X)=1$. As mentioned in Section 4, we have a unicyclization $\mathscr{A}=(X^1,\overline{\partial_2})$. And we have defined the winding number map $w_{\mathscr{A}}$ associated to a basis $\beta$ and the standard harmonic cycle $\lambda_{\mathscr{A}}$. Define the winding number map of $X$ as $w_X= w_{\mathscr{A}}$ and the standard harmonic cycle of $X$ as $\lambda_X\equiv\lambda(X)=\lambda_{\mathscr{A}}$. Note that $w_X$ and $\lambda(X)$ are unique upto sign.

%\begin{thm}(Theorem A, winding number with harmonic cycle)\\
%Let $X$ be a connected finite CW-complex with $\rk H_1(X)=1$. For a cycle $C$ in $Z_1(X)\leq A_1(X)$ (or, $Z_1(X;\mathbb{R})\leq A_1(X;\mathbb{R})$), then 
%\begin{equation}
%C\circ \lambda_X = w_X(C)\cdot k_X
%\end{equation}
%where $\circ$ is the inner product in $A_1(X)$ (or, $A_1(X;\mathbb{R})$), and %$k_X\equiv k(X)$ is the number of spanning trees.
%\end{thm}

%\begin{thm}
%(Theorem B, the CW-harmonic cycle decomposition)\\
%Let X be a connected finite CW-complex with $\rk H_1(X)=1$. Then $\lambda_X$ %is actually a non-zero element of $H^h_1(X)\subsetneq H^h_1(X;\mathbb{R})$. In other words, every elements of $H^h_1(X;\mathbb{R})$ are the multiple of 
%\begin{equation}
%\lambda_X \equiv \sum_{Y \in \mathcal{U}(X^1)} w_{X}(z_Y)\cdot z_Y.
%\end{equation}

%\end{thm}

\subsection{Theorem A}
%It is enough to prove Theorem A for a unicyclization version. 
\begin{thm} (Theorem A, inner product formula)
Let $\mathscr{A}=(G,\partial)$ be a unicyclization. Define 
\begin{equation}
\lambda_{\mathscr{A}}=\lambda(\mathscr{A})= \sum_{Y \in \mathcal{U}(\mathscr{A})}w_{\mathscr{A}}(z_Y)\cdot z_Y
\end{equation}
For $C\in ker\partial_1$, then 
\begin{equation}
C\circ \lambda_{\mathscr{A}} = w_{\mathscr{A}}(C)\cdot k_{\mathscr{A}}
\end{equation}
where $\circ$ is the inner product and $w_{\mathscr{A}}(C)=\det(C,\partial)$.
\end{thm}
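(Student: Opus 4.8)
The plan is to prove the identity by first reducing to simple cycles using linearity in $C$, and then running a deletion--contraction induction on the number of edges. The engine of the induction is Corollary \ref{cor1}, combined with the way the winding number transforms under contraction and deletion and the classical recursion $k(G)=k(G-\sigma)+k(G/\sigma)$ of (\ref{del-con:k(G)}). So the structure of the proof exactly mirrors the deletion--contraction structure already developed for $\mathcal{U}(G)$, $w_{\mathscr{A}}$, and $k(G)$.

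First I would record that both sides are linear in $C$: the inner product $C\circ\lambda_{\mathscr{A}}$ is linear, and $w_{\mathscr{A}}(C)=\det(C,\partial)$ is linear in its first column while $k_{\mathscr{A}}$ is a fixed constant. Hence it suffices to verify the formula as $C$ ranges over a basis of $\ker\partial_1=Z_1(G)$, and I would take the fundamental cycles $\{z_e\}_{e\in E(G)\setminus E(T)}$ of a spanning tree $T$ as in (\ref{basis}), each of which is a \emph{simple} cycle. This reduction is precisely what makes the next step possible, since a simple cycle that is not all of $G$ always leaves at least one edge outside it, whereas a general $C\in\ker\partial_1$ may be supported on every edge.

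Next I would induct on $|E(G)|$, with the inductive statement quantified over all unicyclizations on graphs with that many edges and all $C\in\ker\partial_1$. For the base case, if the chosen simple cycle exhausts $G$ then $G=C$ is a single cycle, $\partial$ is the empty matrix, $\mathcal{U}(G)=\{C\}$, and $\lambda_{\mathscr{A}}=w_{\mathscr{A}}(z_C)\,z_C$; the identity then reduces to $C\circ\lambda_{\mathscr{A}}=w_{\mathscr{A}}(C)\,(C\circ C)=w_{\mathscr{A}}(C)\,|E(C)|=w_{\mathscr{A}}(C)\,k(G)$, which is immediate since $k(G)=|E(C)|$. Otherwise I choose $\sigma\in E(G)\setminus E(C)$, split $\lambda_{\mathscr{A}}=\lambda_{\sigma}(\mathscr{A})+\lambda_{-\sigma}(\mathscr{A})$, and expand $C\circ\lambda_{\mathscr{A}}$ by Corollary \ref{cor1}. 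In the principal case, $\sigma$ a non-loop with $\partial|_{\sigma}\neq\overrightarrow{0}$ (hence a non-bridge), the two pieces become $C_c\circ\lambda(\mathscr{A}_c)$ and $n_{\sigma}\bigl(C_d\circ\lambda(\mathscr{A}_d)\bigr)$; applying the induction hypothesis to $\mathscr{A}_c$ and $\mathscr{A}_d$ (each on $|E(G)|-1$ edges), and then the contraction identity $w_{\mathscr{A}}(C)=w_{\mathscr{A}_c}(C_c)$ and the deletion identity $w_{\mathscr{A}}(C)=n_{\sigma}\,w_{\mathscr{A}_d}(C_d)$, collapses everything to $w_{\mathscr{A}}(C)\bigl(k(G/\sigma)+k(G-\sigma)\bigr)=w_{\mathscr{A}}(C)\,k(G)$ by (\ref{del-con:k(G)}). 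Note that after contraction $C_c$ may be a sum of two cycles rather than a simple cycle, which is exactly why the inductive statement must be phrased for all $C\in\ker\partial_1$ and not only for simple cycles.

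The main obstacle, and where the genuine content lies, is the degenerate edges. If $\sigma$ is a bridge then $k(G)=k(G/\sigma)$ and $\lambda_{-\sigma}(\mathscr{A})=0$, so only the contraction term survives and matches; if $\sigma$ is a loop with $\partial|_{\sigma}\neq\overrightarrow{0}$ then $k(G)=k(G-\sigma)$ and $C\circ\lambda_{\sigma}(\mathscr{A})=0$, so only the deletion term survives and matches. The delicate case is $\sigma$ a non-bridge with $\partial|_{\sigma}=\overrightarrow{0}$ (and likewise a loop with $\partial|_{\sigma}=\overrightarrow{0}$): here deleting $\sigma$ drops $\rk H_1$ to $0$, so $\mathscr{A}_d$ is not a unicyclization and Corollary \ref{cor1} only records that the corresponding piece vanishes. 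To close this case I would show $w_{\mathscr{A}}(C)=0$ automatically: since $\sigma\notin C$ the vector $C$ has vanishing $\sigma$-coefficient, and since $\partial|_{\sigma}=\overrightarrow{0}$ so does every column of $\partial$, so all $m$ columns of the matrix defining $w_{\mathscr{A}}(C)=\det([C]_{\beta},[\partial]_{\beta})$ lie in the subspace of $Z_1(G)$ of cycles with vanishing $\sigma$-coefficient. That subspace has rank $m-1$, because $\sigma$, being a non-bridge (or a loop), lies in some cycle and so the $\sigma$-coefficient functional is nonzero; the $m$ columns are therefore linearly dependent and the determinant vanishes. Both sides are then $0$ and the induction goes through, so the identity holds for every $C\in\ker\partial_1$.
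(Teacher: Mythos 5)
Your proposal is correct and follows essentially the same route as the paper: reduce by linearity to (simple) cycles, induct on $|E(G)|$ with base case $C=G$, and split $\lambda_{\mathscr{A}}=\lambda_{\sigma}(\mathscr{A})+\lambda_{-\sigma}(\mathscr{A})$ for an edge $\sigma\notin C$, handling the same four loop/non-loop, $\partial|_{\sigma}=\overrightarrow{0}$ or not, cases via Corollary \ref{cor1} and $k(G)=k(G-\sigma)+k(G/\sigma)$. Your linear-dependence argument for $w_{\mathscr{A}}(C)=0$ in the degenerate case $\partial|_{\sigma}=\overrightarrow{0}$ with $\sigma$ a non-bridge is a welcome elaboration of a step the paper states only tersely.
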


\begin{proof}
We can assume that $C$ is a closed path rather than a general cycle (i.e., $C\in ker\partial_1$) due to the linearity of the inner product $(-) \circ\lambda_{\mathscr{A}}$ and the determinant $\det(-,\partial)$.

\
We will use an induction proof on the number of edges to prove this theorem. As the induction step, use the following lemmas for each four cases to show that $C\circ \lambda_{\mathscr{A}}= w_{\mathscr{A}}(C)\cdot k(G)$, under the motivation that we will get rid of an edge $\sigma \in E(G)\setminus E(C)$. Then, it is sufficient to prove the last proposition \ref{mat3}, the induction base case when $C$ has every edge in $G$. Note that $\lambda(\mathscr{A})=\lambda_{\sigma}(\mathscr{A})+\lambda_{-\sigma}(\mathscr{A})$.
\end{proof}

The following four lemmas and one proposition are under the assumptions of the preceding main proof. 
\begin{lem}
Assume $\sigma$ is a loop, and $(\partial)|_{\sigma} \neq  \overrightarrow{0}$. Then, $C\circ \lambda_{\mathscr{A}}=n_\sigma \cdot C_{d}\circ \lambda_{\mathscr{A}_d}=n_\sigma \cdot w_{\mathscr{A}_d}(C_d) \cdot k_{\mathscr{A}_d}= w_{\mathscr{A}}(C)\cdot k_{\mathscr{A}}$.
\end{lem}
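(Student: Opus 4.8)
The plan is to read each of the three equalities in the displayed chain directly off results already established, since this lemma is precisely the ``$\sigma$ a loop with nonzero row'' instance of the induction step of Theorem A. Before assembling them I would record the two standing facts that legitimize the reduction. First, because $\sigma$ is chosen in $E(G)\setminus E(C)$, the cycle $C\in\ker\partial_1$ has zero coefficient at $\sigma$, so the truncated vector $C_d\in A_1(G_d)$ is well defined and the Deletion proposition is applicable to $C$. Second, because $\sigma$ is a loop, deleting it does not alter the spanning trees, so the loop case of (\ref{del-con:T(G)}) gives $k_{\mathscr{A}}=k(G)=k(G\setminus\sigma)=k_{\mathscr{A}_d}$.

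For the first equality I would split $\lambda_{\mathscr{A}}=\lambda_\sigma(\mathscr{A})+\lambda_{-\sigma}(\mathscr{A})$ and pair each summand with $C$. Since $\sigma$ is a loop, Corollary \ref{cor1}(1) gives $C\circ\lambda_\sigma(\mathscr{A})=0$; since $\partial|_\sigma\neq\overrightarrow{0}$, Corollary \ref{cor1}(4) gives $C\circ\lambda_{-\sigma}(\mathscr{A})=n_\sigma\cdot C_d\circ\lambda(\mathscr{A}_d)$. Adding the two yields $C\circ\lambda_{\mathscr{A}}=n_\sigma\cdot C_d\circ\lambda_{\mathscr{A}_d}$, which is the first equality. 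To invoke these cleanly I would fix $\beta$ to be the fundamental-cycle basis attached to a spanning tree avoiding $\sigma$, matching the hypotheses of Corollary \ref{cor1} and of the Deletion proposition; by Proposition \ref{mat2} such a rechoice of basis alters $w_{\mathscr{A}}$ only by an overall sign and hence leaves the weighted cycles $w_{\mathscr{A}}(z_Y)\cdot z_Y$ defining $\lambda_{\mathscr{A}}$ unchanged.

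The second equality is the induction hypothesis: the Deletion proposition produces a genuine unicyclization $\mathscr{A}_d=(G_d,\partial_d)$ with one fewer edge, so Theorem A applies to $\mathscr{A}_d$ and gives $C_d\circ\lambda_{\mathscr{A}_d}=w_{\mathscr{A}_d}(C_d)\cdot k_{\mathscr{A}_d}$; multiplying through by $n_\sigma$ produces the middle equation. For the third equality I would combine the winding-number relation $w_{\mathscr{A}}(C)=n_\sigma\cdot w_{\mathscr{A}_d}(C_d)$ furnished by the Deletion proposition with the tree-number identity $k_{\mathscr{A}}=k_{\mathscr{A}_d}$ recorded above, giving $n_\sigma\cdot w_{\mathscr{A}_d}(C_d)\cdot k_{\mathscr{A}_d}=w_{\mathscr{A}}(C)\cdot k_{\mathscr{A}}$.

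Since every ingredient is already in place, I do not expect a deep obstacle; the entire content is the correct assembly of prior results. The only genuine care is bookkeeping: confirming that $C_d$ is well defined via the zero coefficient of $C$ at $\sigma$, checking that the Deletion proposition truly applies to a loop (it requires only $\partial|_\sigma\neq\overrightarrow{0}$, which is assumed, not that $\sigma$ be a non-loop), and ensuring that the bases $\beta,\beta_d$ used here are exactly the compatible ones in Corollary \ref{cor1} and the Deletion proposition so that the cited identities align without stray sign or indexing discrepancies.
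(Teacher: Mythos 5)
Your proposal is correct and follows essentially the same route as the paper: split $\lambda_{\mathscr{A}}$ into $\lambda_\sigma+\lambda_{-\sigma}$ and apply Corollary \ref{cor1} (parts 1 and 4) for the first equality, the induction hypothesis for the second, and the Deletion proposition's relation $w_{\mathscr{A}}(C)=n_\sigma\cdot w_{\mathscr{A}_d}(C_d)$ together with $k_{\mathscr{A}}=k_{\mathscr{A}_d}$ for the third. Your version is in fact slightly more careful than the paper's, since you explicitly verify that $C_d$ is well defined and that the Deletion proposition applies to a loop.
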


\begin{proof}
Due to the assumptions about $\sigma$, from $\mathscr{A}=(G,\partial)$ with $\beta$, we can construct $\mathscr{A}_d=(G_d,\partial_d)$ with $\beta_d$ and the winding difference $n_\sigma$. We showed that $C\circ \lambda_{\mathscr{A}}=n_\sigma \cdot C_d\circ \lambda_{\mathscr{A}_d}$ at Corollary \ref{cor1}. From the induction hypothesis, $C_d\circ \lambda_{\mathscr{A}_d}=w_{\mathscr{A}_d}(C_d)\cdot k_{\mathscr{A}_d}$ holds. We know $n_\sigma \cdot w_{\mathscr{A}_d}(C_d)=w_{\mathscr{A}}(C)$ and $k_{\mathscr{A}_d}=k_{\mathscr{A}}$.
\end{proof}

\begin{lem}
Assume $\sigma$ is a loop, and $(\partial)|_{\sigma} = \overrightarrow{0} $. Then, $C\circ \lambda_{\mathscr{A}}=0=w_{\mathscr{A}}(C) \cdot k_{\mathscr{A}}$.
\end{lem}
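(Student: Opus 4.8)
The plan is to prove the two equalities $C\circ\lambda_{\mathscr{A}}=0$ and $w_{\mathscr{A}}(C)\cdot k_{\mathscr{A}}=0$ separately, both of which ultimately reflect a single feature of the configuration: the loop $\sigma$ is its own fundamental cycle $z_\sigma=[\sigma]$ in the cycle space, while neither the closed path $C$ (recall $\sigma\in E(G)\setminus E(C)$, so $C$ has zero coefficient at $\sigma$) nor any column of $\partial$ (by the hypothesis $\partial|_{\sigma}=\overrightarrow{0}$) involves the edge $\sigma$. In contrast to the preceding lemma, no appeal to the induction hypothesis is needed here, since both sides will turn out to be manifestly zero.

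For the left equality, I would use the splitting $\lambda_{\mathscr{A}}=\lambda_{\sigma}(\mathscr{A})+\lambda_{-\sigma}(\mathscr{A})$ recorded in the main proof and feed it into Corollary \ref{cor1}. Since $\sigma$ is a loop, part (1) gives $C\circ\lambda_{\sigma}(\mathscr{A})=0$; concretely this is because $\lambda_{\sigma}(\mathscr{A})=m\cdot[\sigma]$ is a multiple of $[\sigma]$ (by Statement 1 of the Proposition preceding Corollary \ref{cor1}) and $C$ has zero coefficient at $\sigma$, so the standard inner product vanishes. Since $\partial|_{\sigma}=\overrightarrow{0}$, part (3) gives $C\circ\lambda_{-\sigma}(\mathscr{A})=0$. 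Adding the two contributions yields $C\circ\lambda_{\mathscr{A}}=0$.

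For the right equality it suffices to show $w_{\mathscr{A}}(C)=0$, because $k_{\mathscr{A}}=k(G)>0$ for the connected graph $G$. Using that the winding number is independent of the basis up to sign (Proposition \ref{mat2} and its corollary), I would compute $w_{\mathscr{A}}(C)=\det([C]_{\beta},[\partial]_{\beta})$ in the fundamental-cycle basis $\beta$ relative to a spanning tree $T$, as in (\ref{basis}). Because $\sigma$ is a loop, it lies in no spanning tree, so $z_\sigma=[\sigma]$ is itself a member of $\beta$, and the $z_\sigma$-coordinate of any cycle coincides with its edge-coefficient at $\sigma$. The hypothesis $\partial|_{\sigma}=\overrightarrow{0}$ then forces the entire $z_\sigma$-row of $[\partial]_{\beta}$ to vanish, and $\sigma\notin E(C)$ forces the $z_\sigma$-entry of $[C]_{\beta}$ to vanish as well. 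Hence the augmented matrix $([C]_{\beta},[\partial]_{\beta})$ has an all-zero row, so its determinant, and therefore $w_{\mathscr{A}}(C)$, is $0$.

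The only delicate point is the identification of the $z_\sigma$-coordinate in the fundamental-cycle basis with the coefficient of the edge $\sigma$; this relies on the loop $\sigma$ appearing in exactly one fundamental cycle, namely $z_\sigma$ itself. Verifying that the basis change invoked above preserves the vanishing is routine, since it alters $w_{\mathscr{A}}$ only by a sign. I expect this case to be the mildest of the four comprising Theorem A, precisely because both sides are patently zero.
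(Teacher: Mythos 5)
Your proposal is correct and follows essentially the same route as the paper: the vanishing of $C\circ\lambda_{\mathscr{A}}$ via the split $\lambda_{\sigma}+\lambda_{-\sigma}$ and parts (1) and (3) of Corollary \ref{cor1}, and the vanishing of $w_{\mathscr{A}}(C)=\det([C]_{\beta},[\partial]_{\beta})$ because the row indexed by the fundamental cycle $z_\sigma=[\sigma]$ is identically zero. Your write-up merely spells out the coordinate identification that the paper leaves implicit.
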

\begin{proof}
Since $C\circ \lambda_{\sigma}(\mathscr{A})=0$ and $C\circ \lambda_{-\sigma}(\mathscr{A})=0$ by Corollary \ref{cor1}, we get $C\circ\lambda_{\mathscr{A}}=0$. Moreover, we can get $w_{\mathscr{A}}(C)=0$ because 
$\det(C,\partial)=$
$\det \left( \begin{array}{c}
*\cdots *\\
0\cdots 0 
\end{array}
\right)$
$=0$ where the last row represents $\mathbb{Z}$ generated by the loop $\sigma$ in $\ker\partial_1$.
\end{proof}

\begin{lem}
Assume $\sigma$ is not a loop, and $(\partial)|_{\sigma} =  \overrightarrow{0} $. Then, $C\circ \lambda_{\mathscr{A}}=C_{c}\circ \lambda_{\mathscr{A}_c}= w_{\mathscr{A}_c}(C_c) \cdot k_{\mathscr{A}_c}= w_{\mathscr{A}}(C) \cdot k_{\mathscr{A}}$.
\end{lem}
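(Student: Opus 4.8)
The plan is to establish the four displayed quantities from left to right, peeling off the single edge $\sigma$ by contraction. Recall from the main proof that $\sigma\in E(G)\setminus E(C)$ and that $\lambda_{\mathscr{A}}=\lambda_{\sigma}(\mathscr{A})+\lambda_{-\sigma}(\mathscr{A})$. Since $\sigma$ is not a loop, part (2) of Corollary \ref{cor1} gives $C\circ\lambda_{\sigma}(\mathscr{A})=C_{c}\circ\lambda(\mathscr{A}_{c})$; and since $\partial|_{\sigma}=\overrightarrow{0}$, part (3) gives $C\circ\lambda_{-\sigma}(\mathscr{A})=0$. Adding these yields the first equality $C\circ\lambda_{\mathscr{A}}=C_{c}\circ\lambda_{\mathscr{A}_{c}}$.

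Next, the Contraction proposition provides a unicyclization $\mathscr{A}_{c}=(G_{c},\partial_{c})$ with a basis $\beta_{c}$ satisfying $[C]_{\beta}=[C_{c}]_{\beta_{c}}$ and $w_{\mathscr{A}}(C)=w_{\mathscr{A}_{c}}(C_{c})$. As $G_{c}=G/\sigma$ has one fewer edge than $G$, the induction hypothesis applies to $\mathscr{A}_{c}$ and yields the second equality $C_{c}\circ\lambda_{\mathscr{A}_{c}}=w_{\mathscr{A}_{c}}(C_{c})\cdot k_{\mathscr{A}_{c}}$. Combined with $w_{\mathscr{A}}(C)=w_{\mathscr{A}_{c}}(C_{c})$, the entire statement reduces to the final equality $w_{\mathscr{A}_{c}}(C_{c})\cdot k_{\mathscr{A}_{c}}=w_{\mathscr{A}}(C)\cdot k_{\mathscr{A}}$, i.e. to showing $w_{\mathscr{A}}(C)\cdot(k_{\mathscr{A}}-k_{\mathscr{A}_{c}})=0$.

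This last step is the main obstacle, and it is exactly where the present case differs from the loop cases treated above: contraction does \emph{not} preserve the tree-number, so one cannot simply invoke $k_{\mathscr{A}}=k_{\mathscr{A}_{c}}$. Instead I would rewrite $k_{\mathscr{A}}-k_{\mathscr{A}_{c}}=k(G)-k(G/\sigma)=k(G-\sigma)$ using the deletion--contraction identity (\ref{del-con:k(G)}) (valid since $\sigma$ is not a loop), so that it suffices to prove $w_{\mathscr{A}}(C)\cdot k(G-\sigma)=0$. I would then split into two cases. If $\sigma$ is a bridge, then $G-\sigma$ is disconnected, hence $k(G-\sigma)=0$ and the identity holds for trivial reasons.

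If $\sigma$ is not a bridge, I claim instead that $w_{\mathscr{A}}(C)=0$. Since some cycle runs through $\sigma$, the coefficient functional $z\mapsto z_{\sigma}$ maps $Z_{1}(G)$ onto $\mathbb{Z}$, so its kernel $W=\{z\in Z_{1}(G):z_{\sigma}=0\}$ has corank $1$; on the other hand, $\partial|_{\sigma}=\overrightarrow{0}$ forces every column of $\partial$ to have zero $\sigma$-entry, so $\im\partial\subseteq W$, while unicyclization condition (3) makes $\im\partial$ itself of corank $1$ in $Z_{1}(G)$ (condition (1) ensures the columns of $[\partial]_{\beta}$ stay independent). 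The two subspaces therefore agree after tensoring with $\mathbb{Q}$. Because $\sigma\notin C$ gives $C\in W$, the vector $[C]_{\beta}$ lies in the $\mathbb{Q}$-span of the columns of $[\partial]_{\beta}$, whence $w_{\mathscr{A}}(C)=\det([C]_{\beta},[\partial]_{\beta})=0$. In either case $w_{\mathscr{A}}(C)\cdot k(G-\sigma)=0$, which closes the chain of equalities. I expect the corank comparison in this non-bridge subcase to be the only genuinely nonroutine point; the surprising feature is that precisely when the tree-number fails to be preserved, the winding number of every admissible $C$ must vanish, so the product is rescued.
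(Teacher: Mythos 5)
Your proposal is correct and follows essentially the same route as the paper: Corollary \ref{cor1} (parts 2 and 3) for the first equality, the induction hypothesis applied to $\mathscr{A}_c$ for the second, and the same bridge/non-bridge case split for the last equality. You supply more detail than the paper does — in particular the corank argument showing $\im\partial$ and $\{z\in Z_1(G): z_\sigma=0\}$ agree over $\mathbb{Q}$, hence $w_{\mathscr{A}}(C)=0$ in the non-bridge case, which the paper merely asserts — and that argument is sound.
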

\begin{proof}
Due to the assumptions about $\sigma$, we can construct $\mathscr{A}_c=(G_c,\partial_c)$ with $\beta_c$ from $\mathscr{A}=(G,\partial)$ with $\beta$.

From Corollary \ref{cor1}, we have $C\circ \lambda_{\mathscr{A}}=C_c\circ \lambda_{\mathscr{A}_c}$ and $w_{\mathscr{A}}(C)=w_{\mathscr{A}_c}(C_c)$. And from the induction hypothesis, we get $C_c\circ \lambda_{\mathscr{A}_c}=w_{\mathscr{A}_c}(C_c) \cdot k_{\mathscr{A}_c}$.

We need to consider two cases to show $w_{\mathscr{A}_c}(C_c) \cdot k(\mathscr{A}_c)= w_{\mathscr{A}}(C) \cdot k_{\mathscr{A}}$. The first case is when $\sigma$ is a bridge. Then, $k_{\mathscr{A}}=k_{\mathscr{A}_c}$. The second case is when $\sigma$ is not a bridge. Then, $w_{\mathscr{A}}(C)=0$.
\end{proof}

\begin{lem}
Assume $\sigma$ is not a loop, and $(\partial)|_{\sigma} \neq\overrightarrow{0} $. Then, $C\circ \lambda_{\mathscr{A}}=C_{c}\circ \lambda_{\mathscr{A}_{c}}+n_\sigma \cdot C_d\circ \lambda_{\mathscr{A}_d}=w_{\mathscr{A}_c}(C_c) \cdot k_{\mathscr{A}_c}+n_\sigma \cdot w_{\mathscr{A}_d}(C_d) \cdot k_{\mathscr{A}_d}= w_{\mathscr{A}}(C)\cdot k_{\mathscr{A}}$.
\end{lem}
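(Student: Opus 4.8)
The plan is to read off all three displayed equalities from the decomposition $\lambda_{\mathscr{A}}=\lambda_\sigma(\mathscr{A})+\lambda_{-\sigma}(\mathscr{A})$ together with the reduction results already proved, since the present case is exactly the one in which \emph{both} the contraction $\mathscr{A}_c$ and the deletion $\mathscr{A}_d$ are simultaneously available: $\sigma$ not being a loop lets us form $\mathscr{A}_c=(G_c,\partial_c)$ with basis $\beta_c$, while $\partial|_\sigma\neq\overrightarrow{0}$ lets us form $\mathscr{A}_d=(G_d,\partial_d)$ with basis $\beta_d$ and a well-defined winding difference $n_\sigma$.

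First I would establish the leftmost equality. By linearity of $C\circ(-)$ and the splitting above, $C\circ\lambda_{\mathscr{A}}=C\circ\lambda_\sigma(\mathscr{A})+C\circ\lambda_{-\sigma}(\mathscr{A})$. Because $\sigma\notin E(C)$, I invoke Corollary \ref{cor1}: its second item (the case $\sigma$ not a loop) gives $C\circ\lambda_\sigma(\mathscr{A})=C_c\circ\lambda_{\mathscr{A}_c}$, and its fourth item (the case $\partial|_\sigma\neq\overrightarrow{0}$) gives $C\circ\lambda_{-\sigma}(\mathscr{A})=n_\sigma\cdot C_d\circ\lambda_{\mathscr{A}_d}$. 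Adding these yields the first equality.

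The middle equality is then immediate from the inductive hypothesis of the main proof: both $\mathscr{A}_c$ and $\mathscr{A}_d$ have strictly fewer edges than $\mathscr{A}$, so Theorem A applies to each, giving $C_c\circ\lambda_{\mathscr{A}_c}=w_{\mathscr{A}_c}(C_c)\cdot k_{\mathscr{A}_c}$ and $C_d\circ\lambda_{\mathscr{A}_d}=w_{\mathscr{A}_d}(C_d)\cdot k_{\mathscr{A}_d}$; substituting these produces the second expression.

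For the final equality --- which I expect to be the crux --- I would assemble three independent facts. The Contraction proposition gives $w_{\mathscr{A}_c}(C_c)=w_{\mathscr{A}}(C)$, and the Deletion proposition gives $n_\sigma\cdot w_{\mathscr{A}_d}(C_d)=w_{\mathscr{A}}(C)$; thus both summands carry the \emph{same} winding factor $w_{\mathscr{A}}(C)$, and the sum factors as $w_{\mathscr{A}}(C)\cdot(k_{\mathscr{A}_c}+k_{\mathscr{A}_d})$. Finally, since $\sigma$ is not a loop, the deletion--contraction recursion \ref{del-con:k(G)} gives $k_{\mathscr{A}}=k(G)=k(G/\sigma)+k(G\setminus\sigma)=k_{\mathscr{A}_c}+k_{\mathscr{A}_d}$, which collapses the bracket to $k_{\mathscr{A}}$ and closes the chain. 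The one point demanding care is confirming that the two winding-number reductions and the tree-count recursion all rest on the \emph{same} hypotheses on $\sigma$ (non-loop with $\partial|_\sigma\neq\overrightarrow{0}$); once this compatibility is verified, the three equalities line up with essentially no further computation.
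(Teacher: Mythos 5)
Your proposal is correct and follows essentially the same route as the paper: split $\lambda_{\mathscr{A}}$ into $\lambda_\sigma(\mathscr{A})+\lambda_{-\sigma}(\mathscr{A})$, apply Corollary \ref{cor1} (items 2 and 4) for the first equality, the induction hypothesis for the second, and then the winding-number identities $w_{\mathscr{A}}(C)=w_{\mathscr{A}_c}(C_c)=n_\sigma\cdot w_{\mathscr{A}_d}(C_d)$ together with the deletion--contraction recursion $k_{\mathscr{A}}=k_{\mathscr{A}_c}+k_{\mathscr{A}_d}$ for the third. Your write-up is in fact slightly more explicit than the paper's about where each ingredient comes from.
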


\begin{proof}
Due to the assumptions about $\sigma$, from $\mathscr{A}=(G,\partial)$ with $\beta$, we can construct $\mathscr{A}_c=(G_c,\partial_c)$ with $\beta_c$ and $\mathscr{A}_d=(G_d,\partial_d)$ with $\beta_d$ and the winding difference $n_\sigma$.

We have $C\circ \lambda_{\mathscr{A}}=C\circ \lambda_{\sigma}(\mathscr{A})+C\circ \lambda_{-\sigma}(\mathscr{A})$. By Corollary \ref{cor1} and the induction hypothesis, $C\circ \lambda_{\sigma}(\mathscr{A})=C_c\circ \lambda_{\mathscr{A}_c}=w_{\mathscr{A}_c}(C_c)\cdot k_{\mathscr{A}_c}$ and $C\circ \lambda_{-\sigma}(\mathscr{A})=n_\sigma \cdot C_d\circ \lambda_{\mathscr{A}_d}=n_\sigma \cdot w_{\mathscr{A}_d}(C_d)\cdot k_{\mathscr{A}_d}$. Therefore, we have proved the first and second equality of this lemma.

In addition, we know that $w_{\mathscr{A}}(C)=w_{\mathscr{A}_c}(C_c)=n_\sigma \cdot w_{\mathscr{A}_d}(C_d)$ and $k(\mathscr{A})=k(\mathscr{A}_c)+k(\mathscr{A}_d)$. Thus, the third equality holds.
\end{proof}

Finally, we have proved the induction step through the previous four lemmas. The following proposition is a proof for the base case of the induction hypothesis.

\begin{prop}\label{mat3}
(The base case) The closed path $C$ contains every edge in $G$ (i.e., $C=G$). Then, $C\circ \lambda_{\mathscr{A}}=w_{\mathscr{A}}(C) \cdot k_{\mathscr{A}}$.
\end{prop}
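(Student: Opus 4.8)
The plan is to observe that the base-case hypothesis $C=G$ forces $G$ to be a single cycle, and then to compute both sides of the identity explicitly. First I would invoke the reduction made at the start of the proof of Theorem A: since both $(-)\circ\lambda_{\mathscr{A}}$ and $\det(-,\partial)$ are linear, we may take $C$ to be a closed path, so that as a subgraph $C$ is a (simple) cycle. The assumption that $C$ contains every edge of $G$ then means $G$ is exactly this cycle, i.e. a connected graph with $|E(G)|=|V(G)|$ in which $G$ itself is the only cycle (with the degenerate multigraph analogues of a lone loop and of a pair of parallel edges also allowed).

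Next I would pin down the three combinatorial quantities appearing in the statement. Because $G$ is a single cycle, its only connected spanning subgraph with $|E|=|V|$ edges is $G$ itself, so $\mathcal{U}(G)=\{G\}$ and the unique cycle $C_{G}$ equals $C$, giving $z_{G}=C$ up to orientation. The defining sum therefore collapses to
\[
\lambda_{\mathscr{A}}=\sum_{Y\in\mathcal{U}(G)}w_{\mathscr{A}}(z_{Y})\cdot z_{Y}=w_{\mathscr{A}}(z_{G})\cdot z_{G}=w_{\mathscr{A}}(C)\cdot C,
\]
where the orientation ambiguity is harmless since $w(z)\cdot z$ is orientation-independent, as noted just after the definition of $\lambda_{\mathscr{A}}$. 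Meanwhile, every spanning tree of $G$ is obtained by deleting exactly one edge of the cycle, so $k_{\mathscr{A}}=k(G)=|E(G)|$.

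Then I would carry out the inner-product computation. Writing $C=\sum_{e\in E(G)}\epsilon_{e}[e]$ with each $\epsilon_{e}=\pm 1$, and using that the oriented edges form an orthonormal basis for $\circ$, we get $C\circ C=\sum_{e}\epsilon_{e}^{2}=|E(G)|$. Combining this with the previous paragraph,
\[
C\circ\lambda_{\mathscr{A}}=w_{\mathscr{A}}(C)\,(C\circ C)=w_{\mathscr{A}}(C)\cdot|E(G)|=w_{\mathscr{A}}(C)\cdot k_{\mathscr{A}},
\]
which is precisely the claimed identity.

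I expect the only delicate points to be bookkeeping rather than substance: confirming that "closed path containing every edge" genuinely identifies $G$ with a single cycle, and verifying the equality $C\circ C=|E(G)|=k(G)$ in the degenerate cases (the lone loop, where $|V|=|E|=1$ and the empty subgraph is the unique spanning tree, and the pair of parallel edges, where $|V|=|E|=2$ and each edge gives a spanning tree). Once the structural fact $\mathcal{U}(G)=\{G\}$ and the numerical coincidence $C\circ C=|E(G)|=k(G)$ are in hand, the proposition follows immediately.
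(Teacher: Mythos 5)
Your proposal is correct and follows essentially the same route as the paper: both reduce to $\mathcal{U}(G)=\{G\}$ so that $\lambda_{\mathscr{A}}=w_{\mathscr{A}}(C)\cdot C$, compute $C\circ C=|E(G)|$, and observe $k(G)=|E(G)|$ for a single cycle. Your version is in fact somewhat more careful than the paper's (which additionally notes $w_{\mathscr{A}}(C)=\pm1$ but otherwise omits the degenerate-case checks you flag).
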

\begin{proof}
First, $w_{\mathscr{A}}(C)=\pm 1$ is obvious. We will show $C\circ \lambda_{\mathscr{A}}=\pm k_{\mathscr{A}}$, directly. Because $C$ is the only cycle, $\lambda_{\mathscr{A}}=w_{\mathscr{A}}(C)\cdot C$. Thus, 
\begin{equation}
C\circ \lambda_{\mathscr{A}}=w_{\mathscr{A}}(C)\cdot |E(G)|.
\end{equation}
Moreover, $k_{\mathscr{A}}\equiv k(G)= |E(G)|$ is obvious.
\end{proof}

\subsection{Theorem B}

\begin{thm} (Theorem B, standard harmonic cycle decomposition)
Let $\mathscr{A}=(G,\partial)$ be a unicyclization. 
\begin{equation}
\lambda_{\mathscr{A}}=\lambda(\mathscr{A})= \sum_{Y \in \mathcal{U}(\mathscr{A})}w_{\mathscr{A}}(z_Y)\cdot z_Y
\end{equation}
is a non-zero harmonic cycle.
\end{thm}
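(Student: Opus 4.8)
The plan is to verify directly the two defining conditions of a harmonic cycle, leaning almost entirely on Theorem A. Recall from (\ref{harmonics}) that for the chain complex $\{A_i,\partial_i\}$ associated to $\mathscr{A}$ (with $\partial_2=\partial$), an element of $C_1(G;\mathbb{R})$ lies in $\mathcal{H}_1(\mathscr{A})$ if and only if it is simultaneously a cycle, i.e.\ in $\ker\partial_1$, and a cocycle, i.e.\ in $\ker\partial^t$. The cycle condition is immediate: $\lambda_{\mathscr{A}}$ is by construction an integer combination of the $z_Y\in Z_1(G)=\ker\partial_1$, so $\lambda_{\mathscr{A}}\in\ker\partial_1$.

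For the cocycle condition I would use the standard identity $\ker\partial^t=(\im\partial)^{\perp}$ and reduce the problem to showing $\lambda_{\mathscr{A}}\perp\im\partial$. The key point is that every $c\in\im\partial$ already lies in $\ker\partial_1$: by condition (2) of a unicyclizer we have $\partial_1\partial=0$, so $\partial_1 c=0$. Theorem A therefore applies and gives $c\circ\lambda_{\mathscr{A}}=w_{\mathscr{A}}(c)\cdot k_{\mathscr{A}}$. But $w_{\mathscr{A}}(c)=\det([c]_{\beta},[\partial]_{\beta})=0$ whenever $c\in\im\partial$, since then $[c]_{\beta}$ lies in the column span of $[\partial]_{\beta}$ (this vanishing was already noted just after the definition of the winding number). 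Hence $c\circ\lambda_{\mathscr{A}}=0$ for every $c\in\im\partial$, which yields $\lambda_{\mathscr{A}}\in(\im\partial)^{\perp}=\ker\partial^t$. Combined with the cycle condition, this places $\lambda_{\mathscr{A}}$ in $\ker\partial_1\cap\ker\partial^t=\mathcal{H}_1(\mathscr{A})$, so $\lambda_{\mathscr{A}}$ is harmonic.

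For non-vanishing I would invoke Theorem A once more, now in the form $C\circ\lambda_{\mathscr{A}}=w_{\mathscr{A}}(C)\cdot k_{\mathscr{A}}$. Since $G$ is connected it has at least one spanning tree, so $k_{\mathscr{A}}=k(G)>0$. By the Proposition computing the image of $w_{\mathscr{A}}$, that image is $\tau\mathbb{Z}$ with $\tau=d_1\cdots d_{m-1}\geq 1$; in particular $w_{\mathscr{A}}$ is not identically zero, so there exists $C\in\ker\partial_1$ with $w_{\mathscr{A}}(C)\neq 0$. For such a $C$ we obtain $C\circ\lambda_{\mathscr{A}}=w_{\mathscr{A}}(C)\cdot k_{\mathscr{A}}\neq 0$, which forces $\lambda_{\mathscr{A}}\neq 0$.

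The main obstacle I anticipate is not analytic but a matter of bookkeeping: confirming that the cocycle condition for $\{A_i,\partial_i\}$ is precisely orthogonality to $\im\partial_2=\im\partial$, and checking that Theorem A genuinely applies to the elements $c\in\im\partial$ — which it does exactly because $\partial_1\partial=0$ puts them inside $\ker\partial_1$. Once these identifications are made, everything follows from Theorem A together with the already-established facts that $w_{\mathscr{A}}$ vanishes on $\im\partial$ and has nontrivial image $\tau\mathbb{Z}$.
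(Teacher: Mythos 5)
Your proposal is correct and follows essentially the same route as the paper's own proof: both verify the cycle condition from the definition, reduce the cocycle condition to $\lambda_{\mathscr{A}}\perp\im\partial$ and settle it via Theorem A together with the vanishing of $w_{\mathscr{A}}$ on $\im\partial$, and obtain non-vanishing from Theorem A applied to a cycle with nonzero winding number. Your write-up merely makes explicit two details the paper leaves implicit, namely that $\partial_1\partial=0$ places $\im\partial$ inside $\ker\partial_1$ so Theorem A applies, and that $k_{\mathscr{A}}>0$.
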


\begin{proof}
First, we will check $\lambda_{\mathscr{A}}$ is a harmonic cycle. From its definition, $\lambda_{\mathscr{A}}$ satisfies cycle condition, thus we need to show that $\lambda_{\mathscr{A}}$ satisfies cocycle condition. In other words, we should check that $C\in \ker\partial^t$, or equivalently $C\in (\im\partial)^\perp$. It is enough to show $C\circ \lambda_{\mathscr{A}}=0$ for any $C\in \im\partial$. From Theorem A, the equation holds. Second, there is a cycle $C$ such that $w_{\mathscr{A}}(C) \neq 0$ because $\rk H_1(\mathscr{A})=1$. Theorem A says that $C\circ \lambda_{\mathscr{A}}=w_{\mathscr{A}}(C) \cdot k_{\mathscr{A}}\neq 0$. Thus, $\lambda_{\mathscr{A}}$ is a non-zero harmonic cycle.
\end{proof}

\subsection{without rank 1 condition}
Until now, we deal with a harmonic cycle when $\rk H_1(\mathscr{A})=1$. We will deal the case when $H_1(\mathscr{A})$ is arbitrary and introduce a way to describe a harmonic cycle of $H_1(\mathscr{A})$ in a similar way as rank 1 case.

Let's fix a harmonic cycle $\lambda$. We know $\lambda\circ v = 0$ for a column $v$ of $\partial$ because it is a solution of Laplacian equation. Moreover, let $V$ be a vector space $V=\{v: \lambda\circ v = 0\ and\ v\in \ker\partial_1 \}$, then $\rk V$ is $m-1$ where $m=\rk \ker \partial_1$. We can find the basis of $V$ including $n$ columns of $\partial$ where $\rk\partial = n$ by basis extension. Then, write the basis $\partial'$ as a matrix form. Finally, $\lambda$ is a multiple of the standard harmonic cycle made from $(G,\partial')$. In other words, every harmonic cycle is basically a harmonic cycle from the rank 1 case, i.e.,
$$ \lambda = d \cdot \sum_{Y \in \mathcal{U}(G)} w_{\mathscr{A}}(z_Y)\cdot z_Y $$ for some $d$.

Note that if all entries of $\lambda$ are rational, then we can find $\partial'$ with integer entries also. Moreover, we can construct the actual CW-complex from the information $\partial'$.

\subsection{Extension of winding number}
When we have a unicyclization $\mathscr{A}=(G, \partial)$, we have defined winding number map $w_{\mathscr{A}}: \ker\partial_1 \rightarrow \mathbb{Z}$. Due to Theorem A, we get
\begin{equation}
P\circ \lambda_{\mathscr{A}} = w_{\mathscr{A}}(P)\cdot k_{\mathscr{A}}.
\end{equation}
where $P \in \ker\partial_1$. On the left side of the previous equation, $P\in A_1(G)$ does not need to be restricted to be in $\ker\partial_1$. In other words, we can extend winding number with the aid of the standard harmonic cycle.

\begin{mydef}
Let $\mathscr{A}=(G, \partial)$ be a unicyclization. We have winding number map $w_{\mathscr{A}}: A_1(G) \rightarrow \mathbb{Q}$ as
\begin{equation}
w_{\mathscr{A}}(P)=\dfrac{P\circ \lambda_{\mathscr{A}}}{k_{\mathscr{A}}}
\end{equation}
where $P\in A_1(G)$.
\end{mydef}
This result means that we can evaluate how much arbitrary path is wound around a hole in a CW-complex $X$. In other words, we have a finer tool to distinguish (continuous) paths on $X$ than homology itself in some respect.

\begin{ex}
Let $P\in A_{1}(G)$ be a path. Examples of $P$ are illustrated as red lines on the previous Mobius strip. With the inner product of $P$ with respect to $\lambda$, we can get the winding number of $P$ as $w(P)=\dfrac{P\circ \lambda}{k_G} \in \mathbb{Q}$. Note that $k_G=24$ in this example.
\begin{center}
\includegraphics[scale=0.09]{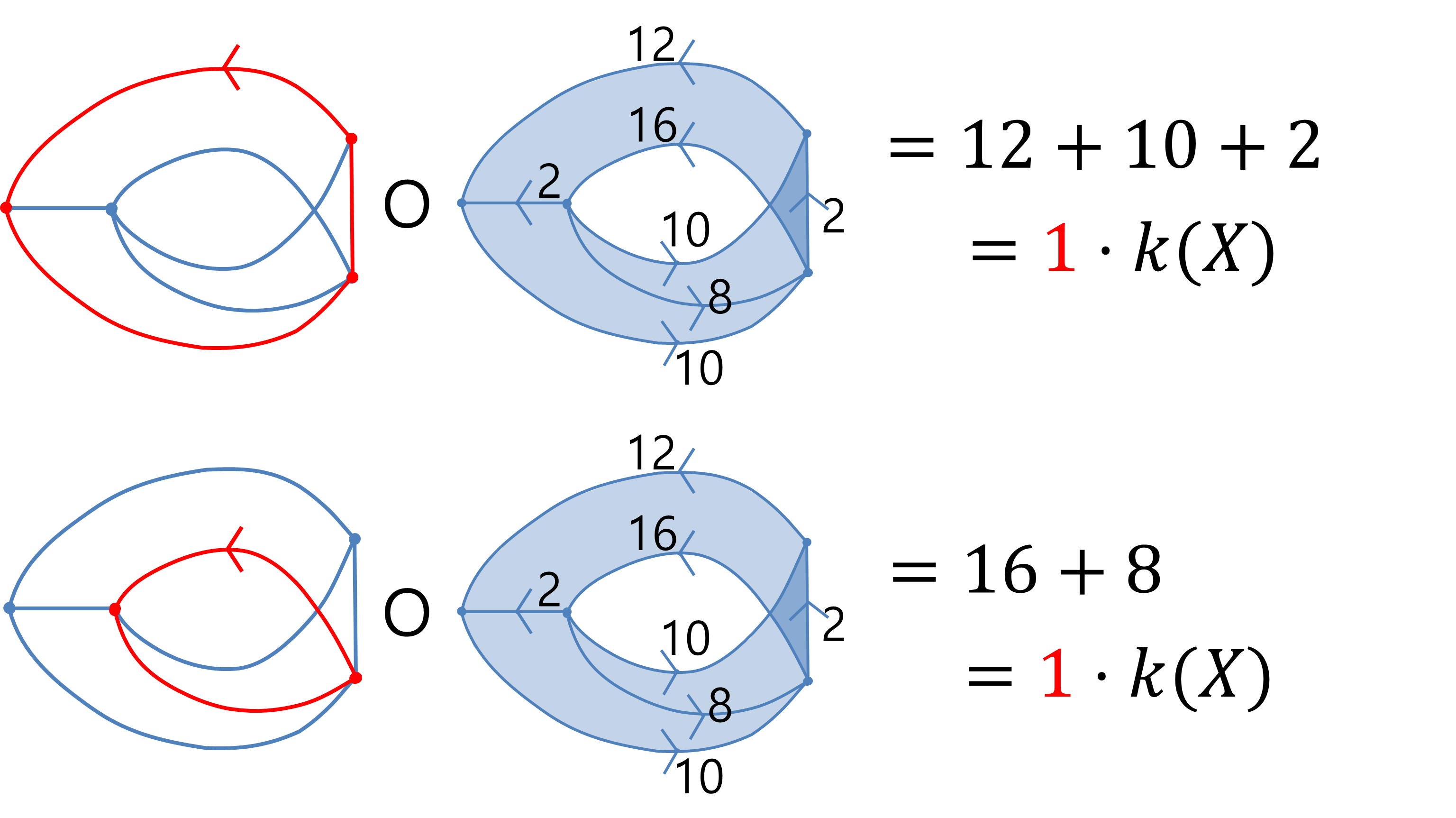}
\includegraphics[scale=0.09]{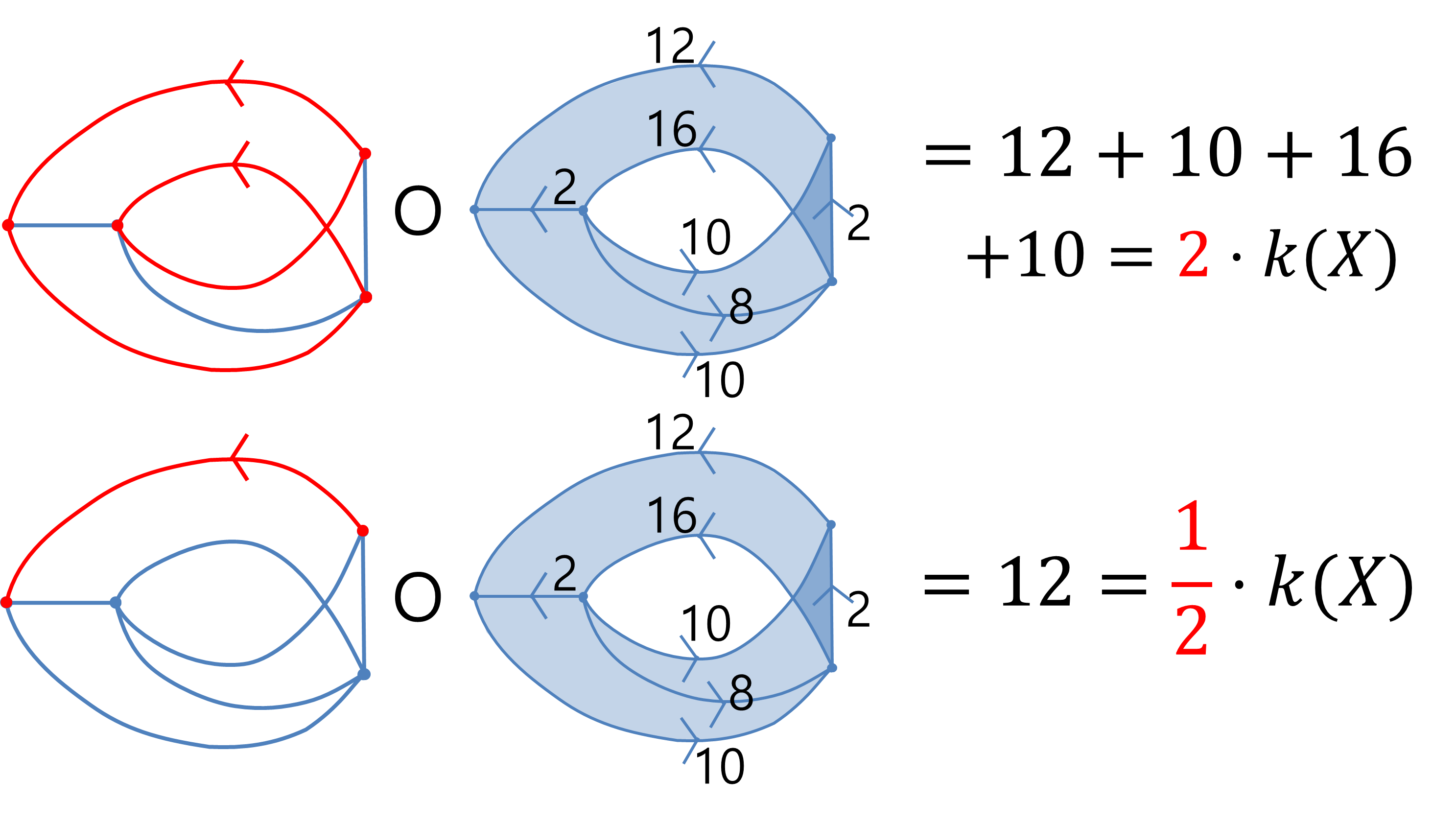}
\captionof{figure}{Depending on paths, we can get diverse winding numbers.}
\end{center}
\begin{remark}
We can distinguish a point and the non-closed path in the fourth example in this example. This path winds around a half of the Mobius strip.
\end{remark}
\end{ex}

\newpage
\section{References}

\end{document}